\newcommand{\Cdb}{\mbox{$\mathbb{C}$}}
\newcommand{\Fdb}{\mbox{$\mathbb{F}$}}
\newcommand{\Rdb}{\mbox{$\mathbb{R}$}}
\newcommand{\Tdb}{\mbox{$\mathbb{T}$}}
\newcommand{\Zdb}{\mbox{$\mathbb{Z}$}}
\newcommand{\M}{\mbox{${\mathcal M}$}}
\renewcommand{\P}{\mbox{${\mathcal P}$}}
\newcommand{\Rad}[1]{{\rm Rad}(#1)}
\newcommand{\norm}[1]{\Vert#1\Vert}
\newcommand{\bignorm}[1]{\bigl\Vert#1\bigr\Vert}
\newcommand{\Bignorm}[1]{\Bigl\Vert#1\Bigr\Vert}
\newcommand{\ten}{\overline{\otimes}}
\newtheorem{theorem}{Theorem}[section]\newtheorem{lemma}[theorem]{Lemma}
\newtheorem{corollary}[theorem]{Corollary}
\newtheorem{proposition}[theorem]{Proposition}
\theoremstyle{remark}
\newtheorem{remark}[theorem]{\bf Remark}\theoremstyle{definition}
\numberwithin{equation}{section}
\begin{document}

\title[]{Rademacher averages on noncommutative symmetric spaces}

\author{Christian Le Merdy, Fedor Sukochev}
\address{Laboratoire de Math\'ematiques\\ Universit\'e de  Franche-Comt\'e
\\ 25030 Besan\c con Cedex\\ France}
\email{clemerdy@univ-fcomte.fr}
\address{School of Informatics and Engineering\\ Flinders
University\\ Bedford Park\\ SA 5042, Australia}
\email{sukochev@infoeng.flinders.edu.au}

\date{\today}

\thanks{The first author is supported by the research program ANR-06-BLAN-0015,
the second author is supported by the ARC}

\begin{abstract} Let $E$ be a separable (or the dual of a separable)
symmetric function space, let $M$ be a semifinite von Neumann
algebra and let $E(M)$ be the associated noncommutative function
space. Let $(\varepsilon_k)_{k\geq 1}$ be a Rademacher sequence,
on some probability space $\Omega$. For finite sequences
$(x_k)_{k\geq 1}$ of $E(M)$, we consider the Rademacher averages
$\sum_k \varepsilon_k\otimes x_k$ as elements of the
noncommutative function space $E(L^\infty(\Omega)\ten M)$ and
study estimates for their norms $\Vert \sum_k \varepsilon_k\otimes
x_k\Vert_E$ calculated in that space. We establish general
Khintchine type inequalities in this context. Then we show that if
$E$ is 2-concave, $\Vert \sum_k \varepsilon_k\otimes x_k\Vert_E$
is equivalent to the infimum of $\norm{ (\sum y_k^*
y_k)^{\frac{1}{2}}} +  \norm{ (\sum z_k z_k^*)^{\frac{1}{2}}}$
over all $y_k,z_k$ in $E(M)$ such that $x_k=y_k+z_k$ for any
$k\geq 1$. Dual estimates are given when $E$ is 2-convex and has a
non trivial upper Boyd index. In this case, $\Vert \sum_k
\varepsilon_k\otimes x_k\Vert_E$ is equivalent to $\norm{ (\sum
x_k^* x_k)^{\frac{1}{2}}} +  \norm{ (\sum x_k
x_k^*)^{\frac{1}{2}}}$. We also study Rademacher averages
$\sum_{i,j} \varepsilon_i\otimes \varepsilon_j\otimes x_{ij}$ for
doubly indexed families $(x_{ij})_{i,j}$ of $E(M)$.
\end{abstract}

\maketitle

\noindent{\it Mathematics Subject Classification : Primary 46L52;
Secondary 46M35, 47L05}

\medskip\section{Introduction}

The purpose of this paper is to study Rademacher averages on
noncommutative symmetric spaces, in connection with some recent
developments of noncommutative Khintchine inequalities. Throughout
we let $E$ be a symmetric Banach function space on $(0,\infty)$
(see \cite{KPS}). Let $(M,\tau)$ be a semifinite von Neumann
algebra equipped with a normal semifinite faithful trace $\tau$.
For any $x$ belonging to the space $\widetilde{M}$ of all
$\tau$-measurable operators, let $\mu(x)\colon t>0\mapsto
\mu_t(x)$ denote the singular value function of $x$ (see
\cite{FK}). Then the noncommutative symmetric space $E(M)$ is the
space of all $x\in \widetilde{M}$ such that $\mu(x)\in E$,
equipped with the norm
$$
\norm{x}_{E(M)}\, =\, \norm{\mu(x)}_E.
$$
We refer to \cite{KPS, LT2} for general facts on symmetric
function spaces and to \cite{O, SC, DDD1, DDD2, DDD3, KS} for a
thorough study of $E(M)$-spaces and their properties. We note that
in the case when $E=L^p(0,\infty)$, the space $E(M)=L^p(M)$ is the
usual noncommutative $L^p$-space associated with $M$ (see e.g.
\cite{FK}, \cite {PX} and the references therein).

In general we will simply let $\norm{\ }_E$ (instead of $\norm{\
}_{E(M)}$) denote the norm on $E(M)$ if there is no risk of
confusion.

\bigskip
Let $(\Sigma, d\mu)$ be a localizable measure space and consider
the commutative von Neumann algebra $L^{\infty}(\Sigma)$. In the
sequel, we will always consider the von Neumann algebra tensor
product $L^{\infty}(\Sigma)\overline{\otimes} M$ as equipped with
the trace $d\mu\otimes\tau$. This gives rise to noncommutative
spaces $E(L^{\infty}(\Sigma)\ten M)$. Note that
$L^p(L^{\infty}(\Sigma)\ten M)$ coincides with the Bochner space
$L^p(\Sigma; L^p(M))$ for any $p\geq 1$. However
$E(L^{\infty}(\Sigma)\ten M)$ is not a Bochner $E(M)$-valued space
in general.

Consider the compact group $\Omega=\{-1,1\}^\infty$, equipped with
its normalized Haar measure $dm$. For $k\geq 1$, we let
$\varepsilon_k\colon\Omega\to \{-1,1\}$ be the Rademacher
functions defined by letting $\varepsilon_k(\Theta)=\theta_k$ for
any $\Theta=(\theta_k)_{k\geq 1}\in\Omega$. Let $x_1,\ldots, x_n$
be a finite family of $E(M)$. We  will consider two Rademacher
averages of the $x_k$'s. First we let
$$
\Bignorm{\sum_{k}\varepsilon_k \otimes
x_k}_{\Rad{E}}\,=\,\Bigl(\int_{\Omega}\Bignorm{\sum_{k}\varepsilon_k(\Theta)
x_k}_{E(M)} \,dm(\Theta)\,\Bigr)
$$
be the norm of the sum $\sum_k\varepsilon_k\otimes x_k\,$ in the
Bochner space $L^1(\Omega; E(M))$. Next we note that each
$\varepsilon_k\otimes x_k$ belongs to the noncommutative space
$E(L^{\infty}(\Omega)\ten M)$ and we let
$$
\Bignorm{\sum_{k}\varepsilon_k\otimes x_k}_{E}
$$
be the norm of their sum $\sum_k\varepsilon_k\otimes x_k\,$ in the
latter space.

Classical commutative or noncommutative Khintchine inequalities
involve the `classical' Rademacher averages expressed by $\norm{\
}_{\Rad{E}}$ (see e.g. \cite{LT2, LP, LPP, PX, LPX}). In this
paper we will be interested in Khintchine inequalities  regarding
the averages expressed by $\norm{\ }_E$. In general, the averages
$\norm{\ }_{\Rad{E}}$ and $\norm{\ }_E$ are not equivalent, see
Section 4 for more on this topic.

We let $p_E$ and $q_E$ denote the Boyd indices of $E$ (see
\cite[Def. 2.b.1]{LT2}).

In the case when $M=L^{\infty}(\Sigma)$ is a commutative von
Neumann algebra, it follows from \cite[Prop. 2.d.1]{LT2} and its
proof that whenever $q_E<\infty$, we have an equivalence
$$
\Bignorm{\sum_k\varepsilon_k\otimes x_k}_{E}\,\approx\,
\Bignorm{\Bigl( \sum_k \vert
x_k\vert^2\Bigr)^{\frac{1}{2}}}_{E(\Sigma)}
$$
for finite families $(x_k)_{k}$ of $E(\Sigma)$. The main purpose
of this paper is to establish noncommutative versions of this
theorem.

To express them, we first note that whenever $A(\cdotp)$ and
$B(\cdotp)$ are two quantities depending on a parameter $\omega$,
we will write $A(\omega)\lesssim B(\omega)$ provided that there is
a positive constant $K$ such that $A(\omega)\leq K B(\omega)$ for
any $\omega$. Then we write $A(\omega)\approx  B(\omega)$ when we
both have $A(\omega)\lesssim B(\omega)$ and $A(\omega)\gtrsim
B(\omega)$.

Given an arbitrary $M$ and a finite family $(x_k)_{k}$ of $E(M)$,
we set
\begin{equation}\label{1Square}
\bignorm{(x_k)_k}_{c}\,=\,\Bignorm{\Bigl(\sum_{k}
x_k^*x_k\Bigr)^{\frac{1}{2}}}_{E(M)}\quad\hbox{and}\quad
\bignorm{(x_k)_k}_{r}\,=\,\Bignorm{\Bigl(\sum_{k} x_kx_k^*
\Bigr)^{\frac{1}{2}}}_{E(M)}.
\end{equation}
Next we let $$
\bignorm{(x_k)_k}_{\max}\,=\,\max\Bigl\{\bignorm{(x_k)_k}_{c}\,
,\, \bignorm{(x_k)_k}_{r}\Bigr\}
$$
and
$$
\bignorm{(x_k)_k}_{\inf}\,=\,\inf\Bigl\{\bignorm{(y_k)_k}_{c}\, +
\,\bignorm{(z_k)_k}_{ r}\Bigr\},
$$
where the infimum runs over all finite families $(y_k)_k$ and
$(z_k)_k$ in $E(M)$ such that $x_k=y_k+z_k$ for any $k\geq 1$.

The classical noncommutative Khintchine inequalities \cite{LP,
LPP, PX} say that if $2\leq p<\infty$, then we have an equivalence
\begin{equation}\label{1Khintchine1} \Bignorm{\sum_k
\varepsilon_k\otimes
x_k}_{\Rad{L^p}}\,\approx\,\bignorm{(x_k)_k}_{\max}
\end{equation}
for finite families $(x_k)_k$ of $L^p(M)$, whereas if $1\leq p\leq
2$, we have an equivalence
\begin{equation}\label{1Khintchine2}
\Bignorm{\sum_k \varepsilon_k\otimes
x_k}_{\Rad{L^p}}\,\approx\,\bignorm{(x_k)_k}_{\inf}.
\end{equation}
Furthermore it is shown in \cite{LPX} that (\ref{1Khintchine1})
remains true when $L^p$ is replaced by any $E$ which is $2$-convex
and $q$-concave for some $q<\infty$, and that (\ref{1Khintchine2})
remains true when $L^p$ is replaced by any $E$ which is
$2$-concave.

The main result of this paper is the following.

\begin{theorem}\label{1Main} Assume that $E$ is separable, or that $E$ is the dual
of a separable symmetric function space.
\begin{itemize}
\item [(1)] If $q_E<\infty$, then we have
\begin{equation}\label{1K1}
\bignorm{(x_k)_k}_{\inf}\,\lesssim\,\Bignorm{\sum_k\varepsilon_k\otimes
x_k}_{E}
\end{equation}
for finite families $(x_k)_k$ of $E(M)$. \item [(2)] If
$q_E<\infty$ and $p_E>1$, then we have
\begin{equation}\label{1K2}
\Bignorm{\sum_k\varepsilon_k\otimes x_k}_{E}\,\lesssim\,
\bignorm{(x_k)_k}_{\max}
\end{equation}
for finite families $(x_k)_k$ of $E(M)$.
\end{itemize}
\end{theorem}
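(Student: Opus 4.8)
The plan is to reduce everything to the classical noncommutative Khintchine inequalities (\ref{1Khintchine1})--(\ref{1Khintchine2}) at the scale of $L^p$, then to climb to a general $E$ by interpolation governed by the Boyd indices, proving (1) directly and deducing (2) from (1) by duality. The first point is that, with $M$ replaced by $N=L^\infty(\Omega)\ten M$, the Bochner norm $\norm{\ }_{\Rad{L^p}}$ and the noncommutative norm $\norm{\ }_{L^p}$ coincide up to a constant by Kahane's inequality. Combining this with (\ref{1Khintchine1}), (\ref{1Khintchine2}) and the trivial bound $\bignorm{(x_k)_k}_{\inf}\leq\bignorm{(x_k)_k}_{\max}$ gives, for every $1\leq p<\infty$ and with $X=\sum_k\varepsilon_k\otimes x_k$,
$$\bignorm{(x_k)_k}_{\inf}\,\lesssim\,\norm{X}_{L^p}\,\lesssim\,\bignorm{(x_k)_k}_{\max},$$
with constants depending only on $p$. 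These are the endpoint estimates.

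For (1) only the left inequality is needed. Since $q_E<\infty$, fix $p_1$ with $q_E<p_1<\infty$ and consider the couple $(L^1(N),L^{p_1}(N))$, for which the endpoints give $\bignorm{(x_k)_k}_{\inf}\lesssim\norm{X}_{L^1}$ and $\bignorm{(x_k)_k}_{\inf}\lesssim\norm{X}_{L^{p_1}}$, each $\bignorm{\ }_{\inf}$ computed in the relevant space. The target $\bignorm{\ }_{\inf}$ is the norm of the sum of the column space (norm $\bignorm{\ }_c$) and the row space (norm $\bignorm{\ }_r$) over $E(M)$, and its $L^1$-- and $L^{p_1}$--versions form a compatible couple. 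Because $q_E<p_1$, the upper end of the couple matches $E$; I would then transfer the two endpoint inequalities to $E$ through the $K$--functional of this couple, having identified the sum of the column and row spaces over $E(M)$ as the corresponding interpolation space. This yields (\ref{1K1}).

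For (2) I would apply (1) to the K\"othe dual $E^{\times}$, which again lies in the admissible class, and dualize. Under $q_E<\infty$ and $p_E>1$ the Rademacher projection $P=Q\ten\mathrm{id}_M$, with $Q$ the orthogonal projection of $L^2(\Omega)$ onto the span of the $\varepsilon_k$, is bounded on $L^p(N)$ for every $1<p<\infty$ and hence, by interpolation using $1<p_{E^{\times}}\leq q_{E^{\times}}<\infty$, on $E^{\times}(N)$. Using the separability hypotheses, so that $E^{\times}(N)$ norms $E(N)$, I would write
$$\norm{X}_{E}\,=\,\sup\Bigl\{\,\bigl|\langle X,w\rangle\bigr|\ :\ w\in E^{\times}(N),\ \norm{w}_{E^{\times}}\leq1\,\Bigr\}.$$
Since $X$ lies in the range of the self-adjoint projection $P$, one may replace $w$ by $Pw=\sum_k\varepsilon_k\otimes w_k$ without changing the pairing, and orthogonality of the $\varepsilon_k$ turns $\langle X,Pw\rangle$ into $\sum_k\tau(x_kw_k)$. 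The noncommutative Cauchy--Schwarz inequality bounds this by $\bignorm{(x_k)_k}_{\max}\,\bignorm{(w_k)_k}_{\inf}$, the latter computed in $E^{\times}(M)$; finally (1) for $E^{\times}$ (valid since $q_{E^{\times}}=p_E'<\infty$) together with $\norm{Pw}_{E^{\times}}\lesssim\norm{w}_{E^{\times}}$ gives (\ref{1K2}).

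The crux is the interpolation in part (1). One must identify the noncommutative sum (and, dually, intersection) column/row spaces over $E(M)$ as the interpolation space of their $L^1$-- and $L^{p_1}$--analogues, and, more delicately, handle the subcouple cut out by the closed linear spans of the Rademacher averages inside $(L^1(N),L^{p_1}(N))$. At interior exponents $P$ complements this subspace, but at the $L^1$ endpoint $P$ is unbounded, so the subspace is not uniformly complemented and the projection cannot simply be transported through the functor; moreover when $p_E=1$ the couple $(L^1,L^{p_1})$ sits at the borderline of Boyd's theorem, so the estimate must instead be propagated by a direct computation of the $K$--functional on the subcouple. It is precisely here that the Fatou and separability hypotheses on $E$ are used. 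Once $P$ is known to be bounded on $E^{\times}(N)$, the duality bookkeeping in part (2) is routine.
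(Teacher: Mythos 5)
Your part (2) is essentially the paper's own argument: dualize part (1) applied to the K\"othe dual $E'$, use the boundedness of the Rademacher projection on $E'(L^{\infty}(\Omega)\ten M)$ obtained by interpolating the $K$-convexity of $L^p(M)$ between two interior exponents (this is exactly where $p_E>1$ and $q_E<\infty$ enter), and conclude via $E(N)^*=E'(N)$. That part is sound, modulo part (1).

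The genuine gap is in part (1), and it cannot be deferred to ``a direct computation of the $K$-functional.'' Your plan is to interpolate the endpoint inequality $\bignorm{(x_k)_k}_{\inf}\lesssim\norm{X}_{L^p}$ between $p=1$ and $p=p_1$. But this inequality is not the boundedness of a linear operator between the couple $(L^1(N),L^{p_1}(N))$ and the couple of inf-spaces, so Proposition \ref{2Interpolation1} does not apply: by the duality of Proposition \ref{2Duality}\,(1), boundedness of the coefficient map $f\mapsto(\widehat{f}(\varepsilon_k))_k$ from $L^1(N)$ into $\bigl(L^1(M)^n,\norm{\ }_{\inf}\bigr)$ would be equivalent to $\bignorm{\sum_k\varepsilon_k\otimes y_k}_{L^{\infty}(\Omega)\ten M}\lesssim\bignorm{(y_k)_k}_{\max}$, which already fails for scalars ($y_k=n^{-1/2}$ gives $\sqrt{n}$ versus $1$). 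You are therefore left with two open-ended problems that your proposal names but does not solve: the $K$-closedness of the Rademacher subcouple inside $(L^1(N),L^{p_1}(N))$, and the identification of the interpolation space of the couple of sum-spaces $\bigl((L^1(M)^n,\norm{\ }_{\inf}),(L^{p_1}(M)^n,\norm{\ }_{\inf})\bigr)$ with $\bigl(E(M)^n,\norm{\ }_{\inf}\bigr)$ --- interpolation does not commute with sums of subspaces in general, and invoking ``Fatou and separability'' is not an argument. The paper sidesteps all of this by a different route, which is the essential content your proposal is missing: transfer from $\varepsilon_k$ to the lacunary exponentials $e_{3^k}$ via a Riesz-product argument (Proposition \ref{2Sidon1}, valid with no assumption on the Boyd indices), then a Paley-type inequality $\bignorm{(\widehat{f}(3^k))_k}_{\inf}\lesssim\norm{f}_E$ for $f\in H^{E}(L^{\infty}(\Tdb)\ten M)$ (Theorem \ref{3Main}). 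The key device there is the Riesz--Sz\"ego factorization $f=gh$ with $g,h\in H^{E^{(2)}}$ and $\norm{g}_{E^{(2)}}\norm{h}_{E^{(2)}}\leq 2\norm{f}_E$: splitting $\widehat{f}(j)=y_j+z_j$ according to which factor carries the high frequencies produces the column/row decomposition directly, and the only interpolation then needed is for an honest linear map on the $2$-convexification $E^{(2)}\in{\rm Int}(L^2,L^q)$, where $q_E<\infty$ is used. A further reduction to projections of finite trace (Proposition \ref{2Duality}\,(2)) is required to bring the Hardy-space machinery, stated for finite von Neumann algebras, to bear on a general semifinite $M$.
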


\bigskip
This theorem will be proved in Section 3. The preceding Section 2
contains preparatory and preliminary results. Section 4 is devoted
to examples and  illustrations. In Corollary \ref{5Equiv3}, we
show that the Rademacher averages $\bignorm{\sum_k
\varepsilon_k\otimes x_k}_E$ are equivalent to
$\norm{(x_k)_k}_{\inf}$ if $E$ is 2-concave, and that they are
equivalent to $\norm{(x_k)_k}_{\max}$ if $E$ is 2-convex and
$q_E<\infty$. These are analogs of the main results of \cite{LPX}.
Also we discuss equivalence between $\bignorm{\sum_k
\varepsilon_k\otimes x_k}_E$ and the classical averages
$\bignorm{\sum_k \varepsilon_k\otimes x_k}_{\Rad{E}}$ and show
that Theorem \ref{1Main} is in some sense optimal. In Section 5,
we study double sums $$
\sum_{i,j}\varepsilon_i\otimes\varepsilon_j\otimes x_{ij},
$$
regarded as elements of $E(L^{\infty}(\Omega)\ten
L^{\infty}(\Omega)\ten M)$. We extend Theorem \ref{1Main} and
Corollary \ref{5Equiv3} to this setting.

We mention (as an open problem) that we do not know if the second
part of Theorem \ref{1Main} remains true without assuming that
$p_E>1$.

\medskip\section{Preliminaries and background}

In this section, we assume that $E$ is a fully symmetric function
space on $(0,\infty)$ (in the sense of \cite{DDD2}). We do not
make any assumption on its Boyd indices. We let $(M,\tau)$ and
$(N,\sigma)$ denote arbitrary semifinite von Neumann algebras.

Given $1\leq p\leq q\leq \infty$, we will write $$ E\in {\rm
Int}(L^p,L^q)
$$
provided that the following interpolation property holds: Whenever
$T$ is a linear operator from  $L^p(0,\infty) +  L^q(0,\infty)$
into itself which is bounded from $L^p(0,\infty)$ into
$L^p(0,\infty)$ and from $L^q(0,\infty)$ into $L^q(0,\infty)$,
then $T$ maps $E$ into itself. We recall that in this case, the
resulting operator
$$
T\colon E\longrightarrow E
$$
is automatically bounded. We refer e.g. to  \cite[I., Sect.
4]{KPS} or \cite{KM} for the necessary background on
interpolation. We recall that if $E\in {\rm Int}(L^p,L^q)$, then
we have
\begin{equation}\label{2Encad}
p\leq p_E\leq q_E\leq q.
\end{equation}
We also note that the fully symmetric assumption on $E$ is
equivalent to the property $E\in{\rm Int}(L^1,L^\infty)$ (see e.g.
\cite[II, Thm. 4.3]{KPS}). Further, any symmetric function space
which is either separable, or is the dual of a separable symmetric
function space, is automatically fully symmetric.

Throughout the paper, we will use the following fundamental result
from \cite{DDD2}.

\begin{proposition}\label{2Interpolation1}
Assume that $E\in {\rm Int}(L^p,L^q)$ and let
$$
T\colon L^p(M) + L^q(M) \longrightarrow L^p(N) + L^q(N)
$$
be any linear operator such that $$ T\colon L^p(M)\longrightarrow
L^p(N)\qquad\hbox{and}\qquad  T\colon L^q(M)\longrightarrow L^q(N)
$$
are bounded. Then $T$ maps $E(M)$ into $E(N)$ and the resulting
operator $T\colon E(M)\to E(N)$  is bounded. Moreover we have an
estimate
$$
\norm{T\colon E(M)\to E(N)}\leq C\,\max\bigl\{\norm{T\colon
L^p(M)\to L^p(N)}\, ,\, \norm{T\colon L^q(M)\to L^q(N)}\bigr\}
$$
for some constant $C$ not depending on either $M$, $N$, or $T$.
\end{proposition}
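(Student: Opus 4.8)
The plan is to reduce the whole statement to pointwise estimates on the classical $K$-functional of the couple $(L^p,L^q)$ on $(0,\infty)$, exploiting the fact that for couples of noncommutative $L^p$-spaces this $K$-functional depends only on the singular value function. I would start from the elementary interpolation inequality: if $T\colon L^p(M)\to L^p(N)$ has norm $M_0$ and $T\colon L^q(M)\to L^q(N)$ has norm $M_1$, then for every $x\in L^p(M)+L^q(M)$ and every $t>0$,
$$
K\bigl(t,Tx;L^p(N),L^q(N)\bigr)\,\leq\,\max(M_0,M_1)\,K\bigl(t,x;L^p(M),L^q(M)\bigr).
$$
This is immediate: apply $T$ to an arbitrary decomposition $x=x_0+x_1$, estimate $\norm{Tx_0}_{L^p(N)}\leq M_0\norm{x_0}_{L^p(M)}$ and $\norm{Tx_1}_{L^q(N)}\leq M_1\norm{x_1}_{L^q(M)}$, bound both constants by their maximum, and take the infimum over decompositions.

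The crucial input is the identity
$$
K\bigl(t,x;L^p(M),L^q(M)\bigr)\,=\,K\bigl(t,\mu(x);L^p(0,\infty),L^q(0,\infty)\bigr),
$$
valid for every $\tau$-measurable $x$, which says that the $K$-functional of an operator for the noncommutative couple coincides with that of its singular value function for the commutative couple. This is the main technical ingredient, and the step I expect to be the real obstacle: it is here that the semifinite trace and the Calder\'on-type theory of singular value functions genuinely enter, and it is precisely the kind of result of Dodds--Dodds--de Pagter invoked above. Granting it, and applying it to $x$ over $M$ and to $Tx$ over $N$, the previous inequality becomes a purely commutative statement on $(0,\infty)$:
$$
K\bigl(t,\mu(Tx);L^p,L^q\bigr)\,\leq\,\max(M_0,M_1)\,K\bigl(t,\mu(x);L^p,L^q\bigr)\qquad(t>0).
$$

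Finally I would invoke the Calder\'on--Mityagin property of the couple $(L^p(0,\infty),L^q(0,\infty))$: since this is a Calder\'on couple, the pointwise domination of $K$-functionals above yields a linear operator $S$, bounded on both $L^p(0,\infty)$ and $L^q(0,\infty)$ with norms controlled by $\max(M_0,M_1)$, such that $S\mu(x)=\mu(Tx)$. Because $E\in{\rm Int}(L^p,L^q)$, any such $S$ automatically maps $E$ into itself with $\norm{S\colon E\to E}\leq C\max(M_0,M_1)$, where $C$ depends only on the interpolation constant of $E$ and the Calder\'on constant of the couple. Consequently
$$
\norm{Tx}_{E(N)}=\norm{S\mu(x)}_E\leq C\max(M_0,M_1)\,\norm{\mu(x)}_E=C\max(M_0,M_1)\,\norm{x}_{E(M)},
$$
which is exactly the asserted estimate with a constant $C$ manifestly independent of $M$, $N$, and $T$. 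Everything beyond the two genuine analytic inputs—the noncommutative $K$-functional identity and the Calder\'on property of the $L^p$-couple—is then just the formal bookkeeping of interpolation.
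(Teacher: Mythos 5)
The paper does not prove this proposition at all: it is quoted verbatim as a known result from the reference \cite{DDD2} (Dodds--Dodds--de Pagter, \emph{Fully symmetric operator spaces}), so there is no in-paper argument to compare against. Your outline is correct and is essentially the standard proof underlying that reference: endpoint boundedness gives pointwise domination of $K$-functionals, the $K$-functional of the couple $\bigl(L^p(M),L^q(M)\bigr)$ is computed from $\mu(x)$ alone, the Calder\'on--Mityagin (Sparr) property of $\bigl(L^p(0,\infty),L^q(0,\infty)\bigr)$ converts the domination $K(t,\mu(Tx))\lesssim K(t,\mu(x))$ into a genuine operator $S$ on the commutative couple with $S\mu(x)=\mu(Tx)$, and the uniform boundedness of interpolation spaces (Aronszajn--Gagliardo/closed graph) gives the constant $C$ independent of $M$, $N$, $T$. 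Two small points of precision: for general $p,q$ the noncommutative $K$-functional formula is an equivalence with constants depending only on $p$ and $q$ (exact equality is the $(L^1,L^\infty)$ case of Fack--Kosaki), not an identity, but this costs only a constant; and the operator $S$ you produce depends on $x$, which is harmless since the bound $\norm{S}_{E\to E}\leq C\max(M_0,M_1)$ is uniform over all such $S$.
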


\bigskip
The norms introduced in (\ref{1Square}) are related to matrix
representations, as follows. Let $n\geq 1$ be an integer and
consider the von Neumann algebra $M_n(M)$ equipped with the trace
$tr\otimes \tau$ (here $tr$ is the usual trace on $M_n$). In the
sequel we will write $E_{ij}$ for the usual matrix units of $M_n$.
For any $x\in E(M)$, we have $$ \vert E_{ij}\otimes x\vert
=E_{jj}\otimes \vert x\vert,
$$
hence $E_{ij}\otimes x\in E(M_n(M))$, with $\norm{E_{ij}\otimes
x}_E=\norm{x}_E$. We deduce that the space $E(M_n(M))$ can be
algebraically identified with the space $M_n\otimes E(M)$ of
$n\times n$ matrices with entries in $E(M)$. Then for any
$x_1,\ldots,x_n$ in $E(M)$, we have
$$
\Bigl\vert \sum_{k=1}^{n} E_{k1}\otimes x_k\Bigr\vert =
E_{11}\otimes \Bigl(\sum_{k=1}^n x_k^* x_k\Bigr)^{\frac{1}{2}},
$$
hence
\begin{equation}\label{2Square}
\Bignorm{\Bigl(\sum_{k=1}^n x_k^*x_k\Bigr)^{\frac{1}{2}}}_{E(M)}\,
=\, \Bignorm{\sum_{k=1}^{n} E_{k1}\otimes x_k}_{E(M_n(M))}\,
=\,\left\Vert\left[\begin{array}{cccc} x_1 & 0 & \cdots &
0\\\vdots &\vdots & \ &\vdots\\   \vdots &\vdots & \ &\vdots\\x_n
& 0 & \cdots & 0\end{array}\right]\right\Vert_{E(M_n(M))}.
\end{equation}
Likewise, we have
$$
\Bignorm{\Bigl(\sum_{k=1}^n x_k
x_k^*\Bigr)^{\frac{1}{2}}}_{E(M)}\,=\, \Bignorm{\sum_{k=1}^n
E_{1k} \otimes
x_k}_{E(M_n(M))}\,=\,\left\Vert\left[\begin{array}{ccc} x_1 &
\cdots & x_n\\  0 &\cdots  & 0 \\  \vdots &  \ &\vdots \\0  &
\cdots & 0\end{array}\right] \right\Vert_{E(M_n(M))}.
$$

Let ${\rm Col}_n\colon M_n(M)\to M_n(M)$ be the natural projection
onto the `column subspace' of $M_n(M)$, taking $E_{i1}\otimes x$
to itself for any $i\geq 1$ and taking $E_{ij}\otimes x$ to $0$
whenever $j\geq 2$. Alternatively, ${\rm Col}_n$ is the right
multiplication $z\mapsto zc$, where $c=E_{11}\otimes 1$. Since
$\mu_t(zc)\leq \mu_t(z)\norm{c}_M=\mu_t(z)$ for any
$z\in\widetilde{M}$ and any $t>0$, the mapping ${\rm Col_n}$
extends to a contractive projection on $E(M_n(M))$, that is,
\begin{equation}\label{2Col}
\bignorm{{\rm Col}_n\colon E(M_n(M))\longrightarrow E(M_n(M))}\leq
1.
\end{equation}
We record for further use the following straightforward
consequence of the latter observation and Proposition
\ref{2Interpolation1}.

\begin{lemma}\label{2Interpolation2}
Assume that $E \in{\rm Int}(L^p, L^q)$. There is a constant $C$
verifying the following property. Let $n\geq 1$ be an integer and
let $T\colon L^p(M)^n + L^q(M)^n\to L^p(N)+L^q(N)$ be a linear map
such that for $r$ equal to either $p$ or $q$, we have
$$
\bignorm{T(x_1,\ldots, x_n)}_r\,\leq\, \Bignorm{\Bigl(\sum_{k=1}^n
x_k^* x_k\Bigr)^{\frac{1}{2}}}_r,\qquad x_1,\ldots, x_n\in L^r(M).
$$
Then we have  $$ \bignorm{T(x_1,\ldots, x_n)}_E\,\leq\, C
\Bignorm{\Bigl(\sum_{k=1}^n x_k^*
x_k\Bigr)^{\frac{1}{2}}}_E,\qquad x_1,\ldots, x_n\in E(M).
$$
\end{lemma}

Let $n\geq 1$ be an integer and let $\bigl(E(M)^n,\norm{\
}_{\inf}\bigr)$ (resp. $\bigl(E(M)^n,\norm{\ }_{\max}\bigr)$)
denote the product space $E(M)^n$ of all $n$-tuples $(x_1,\ldots,
x_n)$ of $E(M)$ equipped with the norm $\norm{(x_k)_k}_{\inf}$
(resp. $\norm{(x_k)_k}_{\max}$). Let $E'$ denote the K\"othe dual
of $E$ and recall that when $E$ is separable, then $E'=E^*$ (see
e.g. \cite[p. 102]{KPS}).

\begin{proposition}\label{2Duality}
Let $n\geq 1$ be an  integer.
\begin{itemize}
\item [(1)] If $E$ is separable, then we both have $$
\bigl(E(M)^n,\norm{\ }_{\inf}\bigr)^*\, =\, \bigl(E'(M)^n,\norm{\
}_{\max}\bigr) \quad\hbox{and}\quad \bigl(E(M)^n,\norm{\
}_{\max}\bigr)^*\, =\, \bigl(E'(M)^n,\norm{\ }_{\inf}\bigr)
$$
isometrically. \item [(2)] If $E$ is separable, or if $E$ is the
dual of a separable symmetric space, then for any $x_1,\ldots,
x_n$ in $E(M)$, we have
$$
\bignorm{(x_k)_k}_{\inf}\,=\,\sup\bigl\{\bignorm{(sx_ks)_k}_{\inf}\,
:\, s\ \hbox{is a selfadjoint projection from}\ M,\
\tau(s)<\infty\,\bigr\}.
$$
\end{itemize}
\end{proposition}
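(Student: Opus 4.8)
The plan is to prove the two parts of Proposition~\ref{2Duality} using standard duality techniques for column and row Hilbertian structures over $E(M)$, adapted from the noncommutative $L^p$ setting.

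For part~(1), I would first observe that $(E(M)^n,\norm{\ }_c)$ and $(E(M)^n,\norm{\ }_r)$ are themselves Banach spaces, via the isometric identifications in \eqref{2Square} with the column and row subspaces of $E(M_n(M))$; indeed $\norm{(x_k)_k}_c$ equals the norm of $\sum_k E_{k1}\otimes x_k$ in $E(M_n(M))$, and likewise for the row norm. Since $E$ is separable, $E(M_n(M))^*=E'(M_n(M))$ isometrically, and the duality pairing restricts to a pairing between the column subspace of $E(M_n(M))$ and the column subspace of $E'(M_n(M))$. The key computation is to identify the induced functional: a tuple $(b_k)_k\in E'(M)^n$ acts on $(x_k)_k$ by $\sum_k \tau(b_k^* x_k)$ (the trace duality coming from the matrix pairing), and the dual norm of this pairing on the column side is exactly $\norm{(b_k)_k}_c$ computed in $E'(M)$. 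Thus $(E(M)^n,\norm{\ }_c)^*=(E'(M)^n,\norm{\ }_c)$ and dually on the row side. Now the abstract point is that $\norm{\ }_{\inf}$ is precisely the sum norm on $E(M)^n$ obtained as the quotient of the $\ell^1$-direct sum of the column and row spaces under the addition map $(y_k,z_k)\mapsto(y_k+z_k)$, while $\norm{\ }_{\max}$ is the corresponding intersection norm. Since the dual of a quotient of an $\ell^1$-sum is the $\ell^\infty$-intersection of the duals (and conversely), part~(1) follows by combining the column/row dualities just established with the standard fact that $(\inf$-sum$)^*=\max$-intersection and $(\max$-intersection$)^*=\inf$-sum.

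For part~(2), the goal is to show the infimum norm is computed by approximation through finite-trace corners $sx_ks$. The inequality $\norm{(sx_ks)_k}_{\inf}\le\norm{(x_k)_k}_{\inf}$ for any finite-trace projection $s$ is immediate, because if $x_k=y_k+z_k$ is an optimal decomposition then $sx_ks=(sy_ks)+(sz_ks)$, and compression by $s$ on both sides is norm-nonincreasing: $\bignorm{(\sum_k s y_k^* s\,s y_k s)^{1/2}}_E\le\bignorm{(\sum_k y_k^* y_k)^{1/2}}_E$ since $s(\sum_k y_k^*y_k)s\le\sum_k y_k^*y_k$ in the ordering and $E(M)$ is monotone (one can phrase this via the contractive two-sided multiplication $z\mapsto szs$ on $E(M)$, as in the $\mathrm{Col}_n$ estimate \eqref{2Col}). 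The reverse inequality, giving the supremum, is the substantive direction. Here I would use part~(1): by duality, $\norm{(x_k)_k}_{\inf}=\sup\{|\langle (x_k)_k,(b_k)_k\rangle|:\norm{(b_k)_k}_{\max}\le1\}$ where $(b_k)_k$ ranges over $E'(M)^n$ (in the dual-of-separable case one works with the predual pairing). Then one approximates: for a nearly optimal functional $(b_k)_k$, one compresses both the pairing and $(b_k)_k$ by a finite-trace projection $s$ chosen so that the compressed pairing $\sum_k\tau(b_k^* s x_k s)$ nearly recovers $\sum_k\tau(b_k^* x_k)$, using that finite-trace projections are dense enough in $M$ (normality of the trace and the fact that $\mathbf 1$ is the strong limit of an increasing net of finite-trace projections). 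Since $\norm{(s b_k s)_k}_{\max}\le1$ as well, this shows $\langle(sx_ks)_k,(b_k)_k\rangle$ is controlled by $\norm{(sx_ks)_k}_{\inf}$, yielding the supremum.

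The main obstacle will be part~(2)'s reverse inequality: making the compression argument genuinely rigorous requires care in choosing the finite-trace projection $s$ so that the trace pairings converge simultaneously for all the (finitely many) entries $x_k$ and $b_k$, and in verifying that compression does not enlarge the $\max$-norm of the test functional. The density/normality argument is where the separability-or-dual-of-separable hypothesis is really used, since it guarantees the relevant net of finite corners suffices to compute the norm via the trace duality. Everything else reduces to the formal calculus of intersection and sum norms and the column/row isometries already recorded in \eqref{2Square} and \eqref{2Col}.
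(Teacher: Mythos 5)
Your proposal follows essentially the same route as the paper: part (1) is obtained from the K\"othe duality $E(M_n(M))^*=E'(M_n(M))$ restricted to the column/row subspaces via the contractive projections of \eqref{2Col}, combined with the standard sum/intersection duality (the paper phrases the column--row pairing bilinearly, so column duals with row, but this is equivalent to your sesquilinear version); part (2) is proved exactly as you outline, by realizing $\norm{(x_k)_k}_{\inf}$ through a norming tuple $(y_k)_k$ with $\norm{(y_k)_k}_{\max}=1$ and approximating the trace pairing by finite-trace compressions. The only ingredient you leave informal --- the convergence $\tau(sx_ksy_k)\to\tau(x_ky_k)$ along finite-trace projections --- is supplied in the paper by a citation to \cite[Prop. 2.5]{CS}, so there is no genuine gap.
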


\begin{proof}
Assume that $E$ is separable. Then it follows from \cite[p.
745]{DDD3} that $E(M_n(M))^*=E'(M_n(M))$. Using (\ref{2Col}) and
its row counterpart, this implies that
$$
\bigl(E(M)^n,\norm{\ }_{c}\bigr)^*\, =\, \bigl(E'(M)^n,\norm{\
}_{r}\bigr) \quad\hbox{and}\quad \bigl(E(M)^n,\norm{\
}_{r}\bigr)^*\, =\, \bigl(E'(M)^n,\norm{\ }_{c}\bigr)
$$
isometrically. Then part (1) of the proposition follows at once,
using standard duality principles (see e.g. \cite[Thm.
I.3.1]{KPS}).

Now take $x_1,\ldots, x_n$ in $E(M)$. By the preceding point,
there exist $y_1,\ldots, y_n$ in $E'(M)$ such that
$\norm{(y_k)_k}_{\max}=1$ and
$$
\bignorm{(x_k)_k}_{\inf}\, =\, \sum_{k=1}^n \tau(x_k y_k).
$$
Let $\varepsilon >0$. For every $k=1,\ldots, n$, there exists a
selfadjoint projection $s_k$ from $M$ such that $\tau(s_k)<\infty$
and
$$
\bigl\vert \tau(sx_ksy_k) -
\tau(x_ky_k)\bigr\vert\,\leq\,\frac{\varepsilon}{n}
$$
for every $s\geq s_k$. Indeed, this follows from \cite[Prop. 2.5]
{CS} (the latter proposition holds true for general semifinite von
Neumann algebras). Set $s= \mathop{\bigvee}\limits_{1\leq k\leq n}
s_k$. Clearly $\tau(s)<\infty$ and we have
$$
\Bigl\vert \sum_{k=1}^n \tau(sx_ksy_k)\,\Bigr\vert\,\geq
\,\bignorm{(x_k)_k}_{\inf} -\varepsilon.
$$
Furthermore, $$ \Bigl\vert \sum_{k=1}^n
\tau(sx_ksy_k)\,\Bigr\vert\,\leq \,
\bignorm{(sx_ks)_k}_{\inf}\bignorm{(y_k)_k}_{\max} =
\bignorm{(sx_ks)_k}_{\inf}
$$
by part (1) of this proposition. Hence
$\norm{(sx_ks)_k}_{\inf}\geq \norm{(x_k)_k}_{\inf} -\varepsilon$.
This shows the non trivial inequality of part (2) in the case when
$E$ is separable. The proof in the case when $E$ is the dual of a
separable symmetric space is similar, by applying part (1) to the
predual of $E$.
\end{proof}

\begin{remark}\label{2Kconvex} Suppose that $E$ is separable.
Applying \cite[p. 745]{DDD3} as above, we have
\begin{equation}\label{2Dual}
E(L^{\infty}(\Omega)\ten M)^* = E'(L^{\infty}(\Omega)\ten M).
\end{equation}
Let $P\colon L^2(\Omega)\to L^2(\Omega)$ be the orthogonal
projection onto the closed linear span of the $\varepsilon_k$'s.
For any $1<r<\infty$, $L^r(M)$ is a $K$-convex Banach space in the
sense of \cite{Pi} (see also \cite{M2}). Indeed, $L^r(M)$ has a
non trivial type (see e.g. \cite[Cor. 5.5]{PX}). Thus  the linear
map $P\otimes I_{L^r(M)}$ extends to a bounded projection
$L^r(\Omega;L^r(M))\to L^r(\Omega;L^r(M))$.

Assume now that $p_E>1$ and $q_E<\infty$. Owing to the relations
$p_{E'}=\frac{q_E}{q_E-1}$ and $q_{E'}=\frac{p_E}{p_E-1}$
\cite[Prop. 2.b.2]{LT2}, we have $p_{E'}>1$ and $q_{E'}<\infty$ as
well.  Then Boyd's Theorem (see e.g. \cite[Thm. 7.3]{KM}) ensures
that $E\in {\rm Int}(L^p,L^q)$ and $E'\in {\rm Int}(L^p,L^q)$ for
some $1<p\leq q<\infty$. Applying the boundedness of $P\otimes
I_{L^r(M)}$ above with $r=p$ and $r=q$, we deduce by interpolation
that $P\otimes I_{E}$ and $P\otimes I_{E'}$ extend to bounded
projections
$$
E(L^{\infty}(\Omega)\ten M)\longrightarrow
E(L^{\infty}(\Omega)\ten M) \qquad\hbox{and}\qquad
E'(L^{\infty}(\Omega)\ten M) \longrightarrow
E'(L^{\infty}(\Omega)\ten M),
$$
respectively. Combining with the duality identification
(\ref{2Dual}), this yields equivalence properties
$$
\Bignorm{\sum_{k}\varepsilon_k \otimes
x_k}_E\,\approx\,\sup\Bigl\{\Bigl\vert \sum_k
\tau(x_ky_k)\Bigr\vert\, :\, y_k\in E',\
\Bignorm{\sum_{k}\varepsilon_k \otimes y_k}_{E'}\leq 1\,\Bigr\}
$$
and
$$
\Bignorm{\sum_{k}\varepsilon_k \otimes
y_k}_{E'}\,\approx\,\sup\Bigl\{\Bigl\vert \sum_k
\tau(x_ky_k)\Bigr\vert\, :\, x_k\in E,\
\Bignorm{\sum_{k}\varepsilon_k \otimes x_k}_{E}\leq 1\,\Bigr\}.
$$
We refer to \cite[Thm. 1.3]{DS} for a slightly more precise result
when $E$ is reflexive.
\end{remark}

\bigskip
Let $E^{(2)}$ denote the 2-convexification of $E$. We will use the
following well-known Cauchy-Schwarz inequality. For any $g,h\in
E^{(2)}(M)$, we have $gh\in E(M)$ and
\begin{equation}\label{2CS}
\norm{gh}_{E(M)}\leq \norm{g}_{E^{(2)}(M)}\norm{h}_{E^{(2)}(M)}.
\end{equation}
Indeed, this inequality follows from \cite[Thm 4.2]{FK}.

We will use Hardy spaces associated to symmetric function spaces.
We start with a general definition. Assume here that $(N,\sigma)$
is a finite von Neumann algebra and let $H^{\infty}(N)\subset N$
be a finite subdiagonal algebra in the sense of \cite[Section
8]{PX}. Recall that for any $1\leq p<\infty$, the associated Hardy
space $H^p(N)$ is defined as the closure of $H^{\infty}(N)$ into
$L^p(N)$ and that we actually have $H^p(N)=H^1(N)\cap L^p(N)$ (see
\cite{S} and \cite[(3.1)]{MW}). Note that since $N$ is finite, we
have a continuous inclusion $E(N)\subset L^1(N)$. Then we let
$$
H^{E}(N) =  H^1(N)\cap E(N),
$$
that we regard as a subspace of $E(N)$ equipped with the induced
norm. It is plain that $H^{E}(N) \subset E(N)$ is closed.

We clearly have $E^{(2)}(N)\subset L^2(N)$, and hence
$H^{E^{(2)}}(N)\subset H^2(N)$. Owing to the fact that the product
of two elements of $H^2(N)$ belongs to $H^1(N)$, we deduce that
for any $x,y\in H^{E^{(2)}}(N)$, the product $xy$ belongs to
$H^{E}(N)$. As a converse to this embedding and its companion
estimate (\ref{2CS}), we have the following factorization
property.

\begin{proposition}\label{2Hardy1}
For every $f$ in $H^{E}(N)$ and for every $\varepsilon >0$, there
exist $g,h\in H^{E^{(2)}}(N) $ such that $f=gh$ and
$$
\norm{g}_{E^{(2)}}
\norm{h}_{E^{(2)}}\leq(1+\varepsilon)\norm{f}_{E}.
$$
\end{proposition}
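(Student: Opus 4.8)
The plan is to build both factors from a single outer (Szegő) factor of the modulus of $f$, with the constant $(1+\varepsilon)$ coming from a harmless perturbation needed to guarantee a positive Fuglede--Kadison determinant. Fix $f\in H^{E}(N)$, which I may assume nonzero, and write $a=|f|=(f^*f)^{1/2}$. Since $\mu(a)=\mu(f)$ we have $a\in E(N)$ with $\norm{a}_E=\norm{f}_E$. For $\delta>0$ set $a_\delta=a+\delta 1$; as $N$ is finite, $1\in E(N)$, so $a_\delta\in E(N)\subset L^1(N)$ with $\norm{a_\delta}_E\leq \norm{f}_E+\delta\norm{1}_E$. Moreover $a_\delta\geq \delta 1$ forces $\Delta(a_\delta)>0$, where $\Delta$ denotes the Fuglede--Kadison determinant; here one uses $\tau(1)<\infty$, so that $\tau(\log a_\delta)$ is finite (bounded below by $\tau(1)\log\delta$ and above by $\tau(a_\delta)$).

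First I would produce the outer factor. By the noncommutative Szegő/outer factorization for finite maximal subdiagonal algebras (see \cite{MW} and the theory in \cite{PX, S}), applied to the positive element $a_\delta$ with $\Delta(a_\delta)>0$, there is an outer $h\in H^2(N)$ with $h^*h=a_\delta$. Because $a_\delta\geq \delta 1$, the element $h$ is bounded below, hence invertible with $\norm{h^{-1}}\leq \delta^{-1/2}$, and the standard property of invertible outers gives $h^{-1}\in H^{\infty}(N)$. Since $\mu(h)^2=\mu(h^*h)=\mu(a_\delta)\in E$, we get $h\in E^{(2)}(N)$, and $h\in H^2(N)\subset H^1(N)$, so in fact $h\in H^{E^{(2)}}(N)$ with $\norm{h}_{E^{(2)}}^2=\norm{h^*h}_E=\norm{a_\delta}_E$.

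Next I set $g=fh^{-1}$. As $f\in H^1(N)$ and $h^{-1}\in H^{\infty}(N)$, the module property $H^1(N)H^{\infty}(N)\subseteq H^1(N)$ gives $g\in H^1(N)$, while $gh=f$ is automatic. The modulus of $g$ is controlled by a direct computation: writing $b=hh^*$ (so $b\geq \delta 1$ and $\mu(b)=\mu(h^*h)=\mu(a_\delta)$) and using $f^*f=a^2=(a_\delta-\delta 1)^2$ together with $a_\delta=h^*h$, one finds $g^*g=b-2\delta 1+\delta^2 b^{-1}=(b^{1/2}-\delta b^{-1/2})^2$, whence $|g|=b^{1/2}-\delta b^{-1/2}\leq b^{1/2}$. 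Therefore $\mu(g)\leq \mu(b^{1/2})=\mu(h)$, so $g\in E^{(2)}(N)$ and thus $g\in H^{E^{(2)}}(N)$ with $\norm{g}_{E^{(2)}}\leq \norm{h}_{E^{(2)}}$. Combining, $\norm{g}_{E^{(2)}}\norm{h}_{E^{(2)}}\leq \norm{h}_{E^{(2)}}^2=\norm{a_\delta}_E\leq \norm{f}_E+\delta\norm{1}_E$, and choosing $\delta$ with $\delta\norm{1}_E\leq \varepsilon\norm{f}_E$ yields the claim.

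The main obstacle is the Hardy-space input rather than the elementary manipulations: one needs the noncommutative Szegő/outer factorization producing an \emph{analytic} $h$ with $h^*h=a_\delta$, and one needs its inverse to be analytic as well, i.e. $h^{-1}\in H^{\infty}(N)$. Both hold for finite maximal subdiagonal algebras, but they are the substantial ingredients. Everything else reduces to the determinant-positivity check on $a_\delta$ and the singular-value inequality $|g|\leq (hh^*)^{1/2}$, which is precisely what transfers the $E$-estimate on $a_\delta$ to the two $E^{(2)}$-estimates, and which is the reason the modulus information (not merely an $L^2$-type norm equality) is essential.
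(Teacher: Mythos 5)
Your proposal is correct and follows essentially the route the paper itself takes, namely adapting the Marsalli--West argument that the paper's one-line proof defers to: your key input (an analytic $h$ with $h^*h=|f|+\delta 1$ and $h^{-1}\in H^{\infty}(N)$) is precisely \cite[Thm.~4.2]{MW} applied to $(|f|+\delta 1)^{1/2}$, and the remaining computation $g=fh^{-1}$, $g^*g=(b^{1/2}-\delta b^{-1/2})^2\leq hh^*$, supplies exactly the details the paper leaves to the reader. I see no gaps.
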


\begin{proof} In the case when $E=L^p$, this result is due to
Marsalli-West \cite[Thm. 4.3]{MW}. In turn, the latter result
relies on \cite[Thm. 4.2]{MW}. It is not hard to adapt the proof
of these two theorems to the above general case. An alternative
route consists in adapting the proof of \cite[Thm. 3.4]{BX}.
Details are left to the reader.
\end{proof}

\begin{remark}\label{2Hardy2}
When dealing with noncommutative Hardy spaces as above, it is
usually assumed that the trace $\sigma$ on $N$ is normalized, i.e.
$\sigma(1)=1$. However the proofs of \cite[Thms 4.2 and 4.3]{MW}
work as well under the mere assumption that $\sigma(1)<\infty$.
Likewise, the above Proposition \ref{2Hardy1} holds true whenever
$\sigma(1)<\infty$.
\end{remark}

In Section 3 we will apply the previous proposition in the
following classical context. We let $\Tdb$ denote the unit circle,
equipped with Haar measure, and we identify $L^{\infty}(\Tdb)$
with the space of essentially bounded $2\pi$-periodic functions
from $\Rdb$ into $\Cdb$ in the usual way. Next let $(M,\tau)$ be a
finite von Neumann algebra and let
$$
N=L^\infty(\Tdb)\ten M.
$$
Since $L^1(N)=L^1(\Tdb; L^1(M))$ one can define Fourier
coefficients on $L^1(N)$ by letting
$$
\widehat{f}(j)=\frac{1}{2\pi}\,\int_{0}^{2\pi} f(t)\,e^{-ijt}\,dt\
\in L^1(M),\qquad f\in L^1(N),\ j\in\Zdb.
$$
Then the space $$ H^{\infty}(N)\, =\, \bigl\{f\in N\, :\,
\forall\, j<0,\ \widehat{f}(j)=0\bigr\}
$$
is a finite subdiagonal algebra. Note that the resulting Hardy
spaces coincide with the vector valued $H^p$-spaces $H^p(\Tdb;
L^p(M))$, for any $1\leq p< \infty$. It is clear that
$$
H^{E}(L^\infty(\Tdb)\ten M)=\bigl\{f\in E(L^\infty(\Tdb)\ten M)\,
:\, \forall\, j<0,\ \widehat{f}(j)=0\bigr\}
$$

We conclude this section with a few remarks on Fourier
coefficients on $E(L^\infty(\Tdb)\ten M)$. For any $j \in\Zdb$,
let $F_j$ be the linear map taking any $f\in L^1(\Tdb; L^1(M))$ to
$\widehat{f}(j)$. Then $F_j$ is both a contraction from $L^1(\Tdb;
L^1 (M))$ into $L^1(M)$ and from $L^{\infty}(\Tdb)
\overline{\otimes} M$ into $M$. Hence by the interpolation
Proposition \ref{2Interpolation1}, it also extends to a bounded
operator
$$
F_{j,E}\colon E(L^\infty(\Tdb)\ten M)\longrightarrow E(M).
$$
Indeed there exists a constant $C_E$ (only depending on $E$) such
that
\begin{equation}\label{2Fourier}
\norm{\widehat{f}(j)}_{E(M)}\,\leq
C_E\,\norm{f}_{E(L^{\infty}(\footnotesize{\Tdb})\ten M)},\qquad
f\in E(L^\infty(\Tdb)\ten M), \ j\in\Zdb.
\end{equation}

\begin{lemma}
\label{2Ampli} For any $f_1,\ldots,f_n$ in $E(L^\infty(\Tdb)\ten
M)$, and for any $j\in\Zdb$, we have
$$
\Bignorm{\Bigl(\sum_{k=1}^{n}
\widehat{f_k}(j)^*\widehat{f_k}(j)\Bigr)^{\frac{1}{2}}}_{E(M)}\,
\leq\,C_E\,\Bignorm{\Bigl(\sum_{k=1}^{n} f_k^*
f_k\Bigr)^{\frac{1}{2}}}_{E(L^{\infty}(\footnotesize{\Tdb})\ten
M)}.
$$
\end{lemma}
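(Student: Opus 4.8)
The plan is to recognize the asserted inequality as nothing but the scalar Fourier estimate \eqref{2Fourier} applied to the matrix algebra $M_n(M)$ in place of $M$, after rewriting both column quantities as matrix norms via \eqref{2Square}. The feature that makes this work cleanly is that the constant $C_E$ in \eqref{2Fourier} depends only on $E$ and not on the underlying von Neumann algebra, since it is inherited from the interpolation constant of Proposition \ref{2Interpolation1}; in particular it does not change when $M$ is enlarged to $M_n(M)$.

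First I would fix the identification
$$
M_n\bigl(L^\infty(\Tdb)\ten M\bigr)\,=\,L^\infty(\Tdb)\ten M_n(M)
$$
of von Neumann algebras, compatibly with the traces, and set $F=\sum_{k=1}^n E_{k1}\otimes f_k$, regarded as an element of $E(L^\infty(\Tdb)\ten M_n(M))$. Since $(M,\tau)$ is finite, so is $M_n(M)$ equipped with $tr\otimes\tau$, hence \eqref{2Fourier} is available over this algebra. Applying \eqref{2Square} over the algebra $N=L^\infty(\Tdb)\ten M$ gives
$$
\Bignorm{\Bigl(\sum_{k=1}^n f_k^*f_k\Bigr)^{\frac{1}{2}}}_{E(N)}\,=\,\norm{F}_{E(L^\infty(\Tdb)\ten M_n(M))},
$$
which is (up to the constant) the right-hand side of the lemma.

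The second step is the computation of the Fourier coefficient of $F$. Viewing $F$ as the function $t\mapsto F(t)=\sum_k E_{k1}\otimes f_k(t)$ on $\Tdb$ with values in $M_n(M)$, the $j$-th coefficient acts only on the $L^\infty(\Tdb)$ factor, so
$$
\widehat{F}(j)\,=\,\sum_{k=1}^n E_{k1}\otimes\widehat{f_k}(j).
$$
Applying \eqref{2Square} now over $M$ yields $\norm{\widehat{F}(j)}_{E(M_n(M))}=\bignorm{(\sum_k \widehat{f_k}(j)^*\widehat{f_k}(j))^{1/2}}_{E(M)}$, which is exactly the left-hand side of the lemma.

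Finally I would invoke \eqref{2Fourier} for the algebra $M_n(M)$, namely $\norm{\widehat{F}(j)}_{E(M_n(M))}\leq C_E\,\norm{F}_{E(L^\infty(\Tdb)\ten M_n(M))}$, and combine it with the two displayed identities to conclude. The only point that must be checked with any care is that $C_E$ is genuinely independent of $n$ (and of $M$), which is guaranteed because it descends from the algebra-independent constant of Proposition \ref{2Interpolation1}. An equivalent route is to apply Lemma \ref{2Interpolation2} with target algebra $M_n(M)$ to the map $(f_1,\ldots,f_n)\mapsto\widehat{F}(j)$, after observing that the Fourier projection $F_j$ is an $L^r$-contraction for $r=p$ and $r=q$. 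I do not expect a serious obstacle here: the entire content lies in the bookkeeping of the matrix identifications, and the estimate itself is just the amplification of \eqref{2Fourier} to $M_n(M)$.
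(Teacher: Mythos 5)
Your proof is correct and is essentially identical to the paper's own argument: both amplify the Fourier coefficient estimate (\ref{2Fourier}) to $M_n(M)$ by setting $f=\sum_k E_{k1}\otimes f_k$, computing $\widehat{f}(j)=\sum_k E_{k1}\otimes\widehat{f_k}(j)$, and translating via (\ref{2Square}). Your added remark that $C_E$ is independent of $n$ and $M$ because it comes from Proposition \ref{2Interpolation1} is exactly the point the paper relies on implicitly.
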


\begin{proof}
We apply the above results on $M_n(M)$, with
$$
f\,=\,\sum_{k=1}^{n} E_{k1}\otimes f_k.
$$
For any $j$, the Fourier coefficient $\widehat{f}(j)$ is equal to
$\sum_{k} E_{k1}\otimes \widehat{f_k}(j)$. Hence the result
follows from (\ref{2Fourier}) and (\ref{2Square}).
\end{proof}

\medskip\section{Proof of the main result}

Throughout this section we let $(M,\tau)$ be a  semifinite von
Neumann algebra. Our aim is to prove Theorem \ref{1Main}. We start
with an equivalence property which will enable us to deduce the
estimation of the Rademacher averages on $E(M)$ from an estimation
of a certain lacunary Fourier series.

We set $e_j(t)=e^{ijt}$ for any integer $j\in\Zdb$ and any
$t\in\Rdb$. The left hand-side in the equivalence below is the
norm of $\sum_{k} e_{3^k}\otimes x_k$ in $E(L^{\infty}(\Tdb)\ten
M)$. Note that no assumption on either $p_E$ or $q_E$ is made in
the next statement.

\begin{proposition}\label{2Sidon1} Assume that $E$ is fully symmetric.
Then we have an equivalence
$$
\Bignorm{\sum_{k} e_{3^k}\otimes
x_k}_{E}\,\approx\,\Bignorm{\sum_{k}\varepsilon_k\otimes x_k}_{E}
$$
for finite families $(x_k)_k$ of $E(M)$.
\end{proposition}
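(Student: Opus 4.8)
The plan is to reduce both inequalities to the interpolation Proposition \ref{2Interpolation1}: since $E$ is fully symmetric we have $E\in\mathrm{Int}(L^1,L^\infty)$, so any linear map between the relevant spaces that is bounded simultaneously at the $L^1$- and $L^\infty$-levels is automatically bounded at the $E$-level, with control on its norm. I would therefore look for \emph{transference operators} implemented by scalar kernels (integration in the commutative variable, acting as the identity on $M$), built from Riesz products adapted to the lacunary sequence $(3^k)$; the ratio $3$ is exactly what guarantees the Fourier-analytic identities below. Write $\mathcal R=\sum_k\varepsilon_k\otimes x_k$ and $L=\sum_k e_{3^k}\otimes x_k$, and set, for a finite family $(x_k)_{k=1}^n$, the kernel
\[
k(\Theta,t)=2\prod_{j=1}^n\bigl(1+\varepsilon_j(\Theta)\cos(3^jt)\bigr),\qquad \Theta\in\Omega,\ t\in\Tdb .
\]

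For the lower estimate $\|\mathcal R\|_E\lesssim\|L\|_E$ I would use the operator $V$ sending $f\mapsto\bigl(\Theta\mapsto\int_{\Tdb}k(\Theta,t)f(t)\,\tfrac{dt}{2\pi}\bigr)$ from $E(L^\infty(\Tdb)\ten M)$ to $E(L^\infty(\Omega)\ten M)$. Two Fourier computations drive the argument. First, because $(3^k)$ has ratio $3$, the frequency $3^k$ is reached in the Riesz product only through the single factor carrying $\cos(3^kt)$, so the coefficient of $e_{3^k}$ equals $\theta_k/2$; hence $V(L)=\sum_k\theta_k x_k=\mathcal R$. Second, the Riesz product integrates to $1$ both over $\Tdb$ (again by lacunarity) and, since the $\varepsilon_j$ are independent under Haar measure, over $\Omega$; this yields $\sup_t\int_\Omega|k(\Theta,t)|\,dm=2$ and $\sup_\Theta\int_\Tdb|k(\Theta,t)|\,\tfrac{dt}{2\pi}=2$, \emph{uniformly in $n$}. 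These two bounds say precisely that $V$ is bounded (with constant $2$) from $L^1$ to $L^1$ and from $L^\infty$ to $L^\infty$, so Proposition \ref{2Interpolation1} gives $\|V\|_{E\to E}\lesssim1$ and therefore $\|\mathcal R\|_E=\|V(L)\|_E\lesssim\|L\|_E$.

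For the upper estimate $\|L\|_E\lesssim\|\mathcal R\|_E$ I would use the transpose $V^{t}$, $g\mapsto\bigl(t\mapsto\int_\Omega k(\Theta,t)g(\Theta)\,dm(\Theta)\bigr)$, which satisfies the same pair of $L^1$/$L^\infty$ bounds and is hence $E$-bounded by the same proposition. Since $\int_\Omega k(\Theta,t)\varepsilon_k(\Theta)\,dm=2\cos(3^kt)$, one gets $V^{t}(\mathcal R)=2\sum_k\cos(3^k\cdot)\otimes x_k$, so $\|\sum_k\cos(3^k\cdot)\otimes x_k\|_E\lesssim\|\mathcal R\|_E$. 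It then remains to pass from the real lacunary series to the analytic one $L$; this is effected by the analytic projection onto the frequencies $\{3^k\}$, which for a Hadamard (hence Sidon) set is convolution with a bounded measure $\rho$ on $\Tdb$ with $\hat\rho(3^k)=1$ and $\hat\rho(-3^k)=0$. Convolution by a bounded measure is again $L^1$- and $L^\infty$-bounded, so applying Proposition \ref{2Interpolation1} once more and noting $\rho*\bigl(\sum_k\cos(3^k\cdot)\otimes x_k\bigr)=\tfrac12 L$ yields $\|L\|_E\lesssim\|\sum_k\cos(3^k\cdot)\otimes x_k\|_E\lesssim\|\mathcal R\|_E$.

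The step I expect to be the main obstacle is the interplay between the \emph{real} kernels (which keep the $L^1$/$L^\infty$ norms bounded uniformly in $n$, through positivity of the Riesz products) and the \emph{analytic} nature of $L$: a naive complex kernel producing $\sum_k e_{3^k}\otimes x_k$ directly from $\mathcal R$ turns out to have $L^1(\Omega)$-norm of order $2^{n/2}$, hence is useless, and this is exactly why the upper estimate is routed through the cosine series and the Sidon/analytic-projection measure $\rho$. The remaining point requiring care is to check that all these kernel operators genuinely extend to $L^1$ and $L^\infty$ of the noncommutative spaces (they act as the identity on $M$, so the scalar total-variation bounds are the operator norms), and to confirm that no $K$-convexity or Boyd-index hypothesis is needed anywhere, consistent with the statement, which assumes only that $E$ is fully symmetric.
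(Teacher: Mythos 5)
Your proposal is correct and follows essentially the same route as the paper's proof: the same positive Riesz product $\prod_{k}\bigl(1+\varepsilon_k(\Theta)\cos(3^k t)\bigr)$, the same pair of $L^1$/$L^\infty$ transference bounds coming from its total mass in each variable, and the same appeal to Proposition \ref{2Interpolation1}. The only divergence is in the last step of the second inequality: where you pass from the cosine series to the analytic series via a Sidon-set interpolating measure $\rho$, the paper instead runs a second (sine) Riesz product and uses the elementary sandwich $\bignorm{\sum_k \cos(3^k\cdot)\otimes x_k}_E\le\bignorm{\sum_k e_{3^k}\otimes x_k}_E\le\bignorm{\sum_k \cos(3^k\cdot)\otimes x_k}_E+\bignorm{\sum_k \sin(3^k\cdot)\otimes x_k}_E$ furnished by the isometry $g(t)\mapsto g(-t)$ --- the two devices are interchangeable (such a $\rho$ can in fact be assembled from those very cosine and sine Riesz products), so this is a cosmetic rather than a substantive difference.
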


\begin{proof} For any $k\geq 1$, let $\eta_k(t)=\cos(3^k t)$ and
$\widetilde{\eta}_k(t)=\sin(3^k t)$ for $t\in\Rdb$. It is plain
that
\begin{equation}\label{2cos}
\Bignorm{\sum_k \eta_k\otimes x_k}_E\leq
\Bignorm{\sum_ke_{3^k}\otimes x_k}_E\leq \Bignorm{\sum_k
\eta_k\otimes x_k}_E+\Bignorm{\sum_k \widetilde{\eta}_k\otimes
x_k}_E.
\end{equation}
Here the norms are computed in $E(L^{\infty}(\Tdb)\ten M)$.
Indeed, the mapping $g(t)\mapsto g(-t)$ is an isometry on the
latter space, which proves the first inequality. The second one is
obvious.

Now fix an integer $n\geq 1$ and consider the Riesz product
$$
K(\Theta,t)
=\prod_{k=1}^{n}\bigl(1+\varepsilon_k(\Theta)\eta_k(t)\bigr),\qquad\Theta\in\Omega,\
t\in\Rdb.
$$
Note that $K$ is a nonnegative function. For any
$A\subset\{1,\ldots,n\}$, set $\varepsilon_A=\prod_{k\in
A}\varepsilon_k$ and $\eta_A=\prod_{k\in A}\eta_k$. By convention,
$\varepsilon_{\emptyset}=1$ and $\eta_{\emptyset}=1$. If
$A\not=\emptyset$, then the integrals of $\varepsilon_A$ and
$\eta_A$ on $\Omega$ and $\Tdb$ respectively are equal to $0$.
Since
$$
K(\Theta,t)
=\,\sum_{A\subset\{1,\ldots,n\}}\varepsilon_A(\Theta)\,\eta_A(t),\qquad
\Theta\in\Omega, t\in\Rdb,
$$
this implies that
$$
\sup_\Theta\int_{0}^{2\pi} \vert
K(\Theta,t)\vert\,\frac{dt}{2\pi}\,=1 \qquad\hbox{and}\qquad
\sup_t\int_{\Omega}\vert K(\Theta,t)\vert\, dm(\Theta)\,=1.
$$
Consequently, one can define two linear contractions
$$
T_1\colon L^1(\Omega;L^1(M)) \longrightarrow L^1(\Tdb;L^1(M))
\qquad\hbox{and}\qquad   T_2\colon L^1(\Tdb;L^1(M))
\longrightarrow L^1(\Omega;L^1(M))
$$
by letting
$$
[T_1(f)](t)=\int_\Omega K(\Theta,t) f(\Theta) \, dm(\Theta),\qquad
f\in L^1(\Omega,L^1(M)),
$$
and
$$
[T_2(g)](\Theta)=\frac{1}{2\pi}\,\int_0^{2\pi} K(\Theta,t) g(t) \,
dt,\qquad g\in L^1(\Tdb,L^1(M)).
$$
Moreover $T_2^*\colon L^\infty(\Omega)\overline{\otimes} M\to
L^\infty(\Tdb)\overline{\otimes} M$ and $T_1$ coincide on the
intersection of their domains. Thus we may define a linear map
$$
T\colon L^1(\Omega;L^1(M))+
L^\infty(\Omega)\overline{\otimes}M\longrightarrow
L^1(\Tdb;L^1(M))+ L^\infty(\Tdb)\overline{\otimes}M
$$
extending both of them. Thus by interpolation (using Proposition
\ref{2Interpolation1}), there is a constant $C\geq 1$ (not
depending  on $n$) such that
$$
\bignorm{T\colon E(L^{\infty}(\Omega)\ten M) \longrightarrow
E(L^{\infty}(\Tdb)\ten M)} \leq C.
$$
For any $k=1,\ldots,n$ and any $x\in E(M)$, we have
$$
T(\varepsilon_k\otimes
x)=\sum_{A\subset\{1,\ldots,n\}}\Bigl(\int_{\Omega}\varepsilon_A\varepsilon_k\,
dm\,\Bigr)\,\eta_A\otimes x = \eta_k\otimes x.
$$ Hence $T$ maps $\sum_k\varepsilon_k\otimes x_k$ to $\sum_k\eta_k\otimes x_k$
for any $x_1,\ldots, x_n\in E(M)$ and we obtain that
$$
\Bignorm{\sum_k \eta_k\otimes x_k}_{E}\,\leq\, C \Bignorm{\sum_k
\varepsilon_k\otimes x_k}_{E}.
$$

A similar result holds with $\eta_k$ replaced by
$\widetilde{\eta_k}$. According to (\ref{2cos}), this yields the
estimate $\lesssim$ in the proposition. The reverse estimate is
proved similarly, using the fact (easy to check) that for any
$A\subset\{1,\ldots,n\}$ and any $k=1,\ldots, n$, we have
\begin{displaymath} \frac{1}{2\pi} \int_{0}^{2\pi} \eta_A(t) \eta_k(t)\, dt\, =
\left\{\begin{array}{ll} 0 & \hbox{if}\ A\not=\{k\},
\\ \frac{1}{2} & \hbox{if}\ A=\{k\}.
\end{array}\right.
\end{displaymath}
\end{proof}

Further equivalence properties of the above type are established
in \cite{AS}.

\begin{remark}\label{2Pisier}
The above result can be regarded as an analog of the following
classical result of Pisier. For any Banach space and for any
$1\leq r<\infty$, there is an equivalence
$$
\Bignorm{\sum_{k}\varepsilon_k\otimes x_k}_{L^r(\Omega:
X)}\,\approx\,\Bignorm{\sum_{k} e_{2^k}\otimes
x_k}_{L^r(\footnotesize{\Tdb};X)}
$$
for finite families $(x_k)_k$ of $X$ \cite{P1}.
\end{remark}

The main result of this section is the following noncommutative
Paley inequality, which extends \cite{LPP}.
\begin{theorem}\label{3Main}
Assume that $E$ is fully symmetric and that $q_E<\infty$.  There
is a constant $C\geq 0$ such that for any finite $(M,\tau)$, for
any $f\in H^{E}(L^{\infty}(\Tdb)\ten M)$ and for any $n\geq 1$, we
have
$$
\bignorm{\bigl(\widehat{f}(3^k)\bigr)_{k=1}^{n}}_{\inf} \, \leq\,
C\, \norm{f}_{E(L^{\infty}({\footnotesize{\Tdb}}) \ten M) }.
$$
\end{theorem}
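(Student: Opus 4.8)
The plan is to prove the inequality by combining the Hardy--space factorization of Proposition \ref{2Hardy1} with the interpolation machinery of Section 2, following the scheme of the classical noncommutative Paley inequality of \cite{LPP} but replacing the $L^p$-estimates used there by their $E$-counterparts. Since $M$ is finite, so is $N=L^\infty(\Tdb)\ten M$, and Proposition \ref{2Hardy1} applies: given $\varepsilon>0$, I would write $f=gh$ with $g,h\in H^{E^{(2)}}(N)$ and $\norm{g}_{E^{(2)}}\norm{h}_{E^{(2)}}\le(1+\varepsilon)\norm{f}_E$. As $g$ and $h$ have spectrum in $\{j\ge 0\}$, their Fourier coefficients convolve, so for each $k$
$$
\widehat f(3^k)=\sum_{i+j=3^k,\, i,j\ge 0}\widehat g(i)\,\widehat h(j).
$$

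Next I would split this finite sum at the midpoint frequency (note that $3^k$ is odd, so there is no boundary term): let $y_k$ gather the terms whose $g$-frequency lies in $(3^k/2,3^k]$ and let $z_k$ gather the remaining terms, whose $h$-frequency lies in $(3^k/2,3^k]$, so that $\widehat f(3^k)=y_k+z_k$. By the definition of $\norm{\ }_{\inf}$ it then suffices to prove
$$
\bignorm{(y_k)_k}_c\,\lesssim\,\norm{g}_{E^{(2)}}\norm{h}_{E^{(2)}}\qquad\text{and, symmetrically,}\qquad \bignorm{(z_k)_k}_r\,\lesssim\,\norm{g}_{E^{(2)}}\norm{h}_{E^{(2)}}.
$$
The ratio-$3$ lacunarity enters precisely here: the frequency bands $(3^k/2,3^k]$ attached to the successive $y_k$ are pairwise disjoint (since $3^{k+1}/2>3^k$), and likewise for the $z_k$. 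This disjointness is what lets the $y_k$ assemble into a column object and the $z_k$ into a row object.

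For the column estimate, let $g^{(k)}$ be the spectral block of $g$ supported in $(3^k/2,3^k]$, so that $y_k=\widehat{g^{(k)}h}(3^k)$. Working in $M_n(N)$, the column $\sum_k E_{k1}\otimes y_k$ is the zeroth Fourier coefficient of the product $\bigl(\sum_k E_{k1}\otimes g^{(k)}e_{-3^k}\bigr)\bigl(E_{11}\otimes h\bigr)$. Hence, applying first the Fourier coefficient bound (\ref{2Fourier}), then the Cauchy--Schwarz inequality (\ref{2CS}) in $E(M_n(N))$, together with (\ref{2Square}),
$$
\bignorm{(y_k)_k}_c\,\le\, C_E\,\Bignorm{\sum_k E_{k1}\otimes g^{(k)}e_{-3^k}}_{E^{(2)}}\norm{h}_{E^{(2)}}\,=\,C_E\,\Bignorm{\Bigl(\sum_k (g^{(k)})^*g^{(k)}\Bigr)^{\frac{1}{2}}}_{E^{(2)}}\norm{h}_{E^{(2)}}.
$$
It then remains to control the column square function of the disjoint blocks by $\norm{g}_{E^{(2)}}$; this is a lacunary Littlewood--Paley inequality, available because $q_{E^{(2)}}=2q_E<\infty$ and $p_{E^{(2)}}\ge 2>1$, which one would establish at the endpoints $L^2$ and $L^{q_1}$ for a finite $q_1>2q_E$ and then interpolate through Lemma \ref{2Interpolation2}, using $E^{(2)}\in{\rm Int}(L^2,L^{q_1})$. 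The estimate for $\norm{(z_k)_k}_r$ is identical after exchanging $g$ and $h$. Note that only $q_E<\infty$ is used, never $p_E>1$.

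The main obstacle is this last step, namely the operator-valued lacunary (column) Littlewood--Paley estimate $\bignorm{(\sum_k (g^{(k)})^*g^{(k)})^{1/2}}_{E^{(2)}}\lesssim\norm{g}_{E^{(2)}}$, with a constant independent of $n$ and of $M$. This is the substantive analytic input extending \cite{LPP}; it is exactly where the finiteness of $q_E$ is needed (through $q_{E^{(2)}}<\infty$), and where the lacunarity is genuinely exploited, and it requires proving the corresponding $L^{q_1}$ column bound before the interpolation of Lemma \ref{2Interpolation2} can be invoked. Once this is in hand, the factorization and the two elementary identities (\ref{2Fourier}) and (\ref{2CS}) assemble the result at once.
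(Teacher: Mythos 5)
Your proposal follows the paper's proof essentially step for step: the same Marsalli--West type factorization $f=gh$ from Proposition \ref{2Hardy1}, the same midpoint splitting of the convolution $\widehat f(3^k)=y_k+z_k$ according to whether the $g$-frequency exceeds $3^k/2$, and the same use of the Fourier-coefficient bound and the Cauchy--Schwarz inequality (\ref{2CS}) to reduce everything to the lacunary column square-function estimate $\bignorm{\bigl(\sum_k (g^{(k)})^*g^{(k)}\bigr)^{1/2}}_{E^{(2)}}\lesssim\norm{g}_{E^{(2)}}$. All of that is correct, and you correctly identify where $q_E<\infty$ enters. But the one step you leave as a black box is precisely the substantive analytic content of the theorem, and ``one would establish it at the endpoints $L^2$ and $L^{q_1}$'' is not a proof: nothing in your write-up says how the $L^{q_1}$ column bound is actually obtained, and it is not a formal consequence of the disjointness of the blocks. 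The paper's argument for it is: for $2<q<\infty$, the $2$-convexity of $L^q$ (the easy half of the noncommutative Khintchine inequality) gives $\bignorm{(\sum_k\varphi_k^*\varphi_k)^{1/2}}_{L^q}\le\bignorm{\sum_k\varepsilon_k\otimes\varphi_k}_{L^q(\Omega\times\Tdb;L^q(M))}$, and then one needs the uniform sign-change bound $\bignorm{\sum_k\theta_k\varphi_k}_{L^q(\Tdb;L^q(M))}\le K_q\norm{\varphi}_{L^q}$ for all choices $\theta_k=\pm1$; this holds because $L^q(M)$ is a UMD space and the symbol equal to $\theta_k$ on the block $(3^k/2,3^k]$ and $0$ elsewhere is an $L^q(M)$-valued Fourier multiplier controlled by McConnell's vector-valued multiplier theorem --- this is where the ratio-$3$ lacunarity is exploited a second time. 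Without this input, or some substitute for it, your proof does not close.

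A secondary point: the interpolation you invoke is not quite Lemma \ref{2Interpolation2}. The map to be interpolated sends a single function $\varphi$ to the column matrix $\sum_k E_{k1}\otimes\varphi_k$, so its domain carries the plain $L^r$-norm rather than a column norm; Lemma \ref{2Interpolation2} is stated for maps going in the opposite direction (from a tuple measured in the column norm into a single space). The correct tool is Proposition \ref{2Interpolation1} applied to the operator $T\colon L^r(\Tdb;L^r(M))\to L^r(M_l\otimes L^\infty(\Tdb)\ten M)$, combined with the identification (\ref{2Square}) of the column norm of the output. This is how the paper proceeds; the slip is easily repaired, but as written your appeal to Lemma \ref{2Interpolation2} does not match the shape of the map.
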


\begin{proof} Throughout we let $(m_j)_{j\geq 0}$ be the sequence given by
$m_j=1$ if $j=3^k$ for some $k\geq 1$, and $m_j=0$ otherwise. Let
$f\in H^{E}(L^{\infty}(\Tdb)\ten M)$. Applying  Proposition
\ref{2Hardy1} (and Remark \ref{2Hardy2}) with
$N=L^\infty(\Tdb)\overline{\otimes}M$ and $\varepsilon =1$, we
find $g,h\in H^{E^{(2)}}(L^{\infty}(\Tdb)\ten M)$ such that $f=gh$
and
$$ \norm{g}_{E^{(2)}}\norm{h}_{E^{(2)}} \leq 2\norm{f}_E.
$$
By analyticity we have
$$
\widehat{f}(j) =\sum_{0\leq i\leq
j}\widehat{g}(i)\,\widehat{h}(j-i),\qquad j\geq 0.
$$
We define
$$
y_j=\sum_{\frac{j}{2}<i\leq j} \widehat{g}(i)\,
\widehat{h}(j-i)\qquad\hbox{and}\qquad z_j=\sum_{0\leq i\leq
\frac{j}{2}}\widehat{g}(i)\widehat{h}(j-i)
$$
for any $j\geq 0$, so that we have decompositions
$\widehat{f}(j)=y_j+z_j$ in $E(M)$. Thus it suffices to show that
for some constant $C\geq 0$, we have
\begin{equation}\label{3Estimate}
\bignorm{\bigl(m_j y_j\bigr)_{j=0}^{l}}_{c}\,\leq\,C\,\norm{f}_{E}
\qquad\hbox{and}\qquad
\bignorm{\bigl(m_jz_j\bigr)_{j=0}^{l}}_{r}\,\leq\,
C\,\norm{f}_{E}.
\end{equation}
for any $l\geq 1$. Given any integer $j\geq 0$, we define $$ g_j
=\,\sum_{\frac{j}{2}<i\leq j} e_i
\otimes\widehat{g}(i),\qquad\hbox{and}\qquad  h_j
=\,\sum_{\frac{j}{2}\leq i\leq j} e_i\otimes\widehat{h}(i)
$$
as elements of $H^{E^{(2)}}(L^{\infty}(\Tdb)\ten M)$. Then we have
$$
m_j y_j=m_j\,\sum_{i=0}^{j} \widehat{g_j}(i)\, \widehat{h}(j-i) =
\widehat{m_jg_j h}(j)
$$
and similarly,
$$
m_jz_j = \widehat{m_jg h_j}(j).
$$
We shall now concentrate on the first part of (\ref{3Estimate}).
Applying Lemma \ref{2Ampli}, we deduce from above that
$$
\bignorm{\bigl(m_j y_j\bigr)_{j=0}^{l}}_{c}
\,\leq\,C_E\,\Bignorm{\Bigl(\sum_{j=0}^{l} (m_j g_j h)^*(m_j g_j
h)\Bigr)^{\frac{1}{2}}}_{E(L^{\infty}(\footnotesize{\Tdb})\ten
M)}.
$$
Recall that for any $u$ and $v$ in some space of the form
$N+L^1(N)$, with $u\geq 0$, we have $(v^* u v)^{\frac{1}{2}} =
\vert u^{\frac{1}{2}} v\vert$. Consequently,
\begin{align*}
\Bignorm{\Bigl(\sum_{j=0}^{l}  (m_j g_j h)^*(m_j g_j
h)\Bigr)^{\frac{1}{2}}}_{E(L^{\infty}(\footnotesize{\Tdb})\ten M)}
& \,= \,\Bignorm{\Bigl(h^*\Bigl(\sum_{j=0}^{l} (m_j g_j)^* (m_j
g_j)
\Bigr)h\Bigr)^{\frac{1}{2}}}_{E(L^{\infty}(\footnotesize{\Tdb})\ten
M)}
\\ &\, =  \,\bignorm{\Bigl(\sum_{j=0}^{l}(m_j g_j)^* (m_j g_j)
\Bigr)^{\frac{1}{2}} \cdotp
h}_{E(L^{\infty}(\footnotesize{\Tdb})\ten M)}
\\&\, \leq \,\bignorm{\Bigl(\sum_{k=1}^{l} g_{3^k}^*
g_{3^k}\Bigr)^{\frac{1}{2}} \cdotp
h}_{E(L^{\infty}(\footnotesize{\Tdb})\ten M)}
\end{align*}
Applying the `Cauchy-Schwarz inequality' (\ref{2CS}) on
$L^{\infty}(\Tdb)\overline{\otimes}M$, we therefore obtain that
$$
\bignorm{\bigl(m_j
y_j\bigr)_{j=1}^{l}}_{c}\,\leq\,C_E\,\norm{h}_{E^{(2)}}\,
\Bignorm{\Bigl(\sum_{k=0}^{l} g_{3^k}^*g_{3^k}
\Bigr)^{\frac{1}{2}}}_{E^{(2)}}.
$$
For $l\geq 1$ as above, consider the linear map
$$
T\colon L^2(\Tdb; L^2(M))\longrightarrow L^2(M_l \otimes
L^{\infty}(\Tdb)\overline{\otimes} M)
$$
defined by $$ T(\varphi)=\left[\begin{array}{cccc} \varphi_1 & 0 &
\cdots & 0\\\vdots &\vdots & \ &\vdots\\   \vdots &\vdots & \
&\vdots\\\varphi_l & 0 & \cdots &
0\end{array}\right],\qquad\hbox{with}\quad
\varphi_k=\sum_{\frac{3^k}{2}<i\leq 3^k} e_i
\otimes\widehat{\varphi}(i).
$$
It is plain that $T$ is a contraction on $L^2$. Now let
$2<q<\infty$ and consider $\varphi\in L^q(\Tdb; L^q(M))$. The easy
part of the noncommutative Khintchine inequalities on $L^q(M)$
(which follows from the $2$-convexity of $L^q$) yields
$$
\norm{T(\varphi)}_{L^q(M_l \otimes
L^{\infty}(\footnotesize{\Tdb})\overline{\otimes} M)}\,=\,
\Bignorm{\Bigl(\sum_{k=1}^{l}
\varphi_k^*\varphi_k\Bigr)^{\frac{1}{2}}}_{L^q(\footnotesize{\Tdb};
L^q(M))}\,\leq \, \Bignorm{\sum_{k=1}^l\varepsilon_k\otimes
\varphi_k}_{L^q(\Omega\times \footnotesize{\Tdb}; L^q(M))}.
$$
Furthermore there exists  a constant $K_q$ (only depending on $q$)
such that
$$
\Bignorm{\sum_{k=1}^l
\theta_k\varphi_k}_{L^q(\footnotesize{\Tdb};L^q(M))}\,\leq\,K_q
\norm{\varphi}_{L^q(\footnotesize{\Tdb};L^q(M))}
$$
for any $\theta_k =\pm 1$ and any $\varphi\in L^q(\Tdb;L^q(M))$.
Indeed, $L^q(M)$ is a UMD Banach space, hence the latter estimate
follows from the vector-valued Fourier multiplier theory on this
class (\cite{MC}, see also \cite{B}). We deduce that
$$
\bignorm{T\colon L^q(\Tdb; L^q(M))\longrightarrow L^q(M_l \otimes
L^{\infty}(\Tdb) \overline{\otimes} M)))} \leq K_q.
$$
We now use interpolation. The space $E^{(2)}$ is 2-convex by
nature, and we know that $q_{E^{(2)}}=2q_E<\infty$. Hence by
\cite[Thm. 7.3]{KM}, there exists $2<q<\infty$ such that $E\in{\rm
Int}(L^2;L^q)$. Thus by Proposition \ref{2Interpolation1}, we have
$$
\bignorm{T\colon E^{(2)}(L^{\infty}(\Tdb)\ten M)\longrightarrow
E^{(2)}(M_l \otimes L^{\infty}(\Tdb)\ten  M)} \leq K
$$
for some constant $K$ not depending on $l\geq 1$. Now observe that
$T(g)=\sum_{k=1}^l E_{k1}\otimes g_{3^k}$. According to
(\ref{2Square}), this yields
$$
\Bignorm{\Bigl(\sum_{k=1}^{l} g_{3^k}^*
g_{3^k}\Bigr)^{\frac{1}{2}}}_{E^{(2)}}\,\leq\,K\,\norm{g}_{E^{(2)}}.
$$
Consequently, we have
$$
\bignorm{\bigl(m_j y_j\bigr)_{j=0}^{l}}_{c}\,\leq\,K C_E\,
\norm{g}_{E^{(2)}}\norm{h}_{E^{(2)}}\,\leq 2 KC_E \norm{f}_{E}.
$$
This is the first part of (\ref{3Estimate}), and the proof of the
second one is similar, using the $h_j$'s.
\end{proof}

\begin{proof} [{\it Proof of Theorem \ref{1Main}.}] We first prove
the lower estimate (part (1)). Let $x_1,\ldots,x_n$ in $E(M)$, and
let $s\in M$ be a selfadjoint projection with $\tau(s)<\infty$.
Consider the finite von Neumann algebra $sMs$ and the analytic
polynomial
$$
f_s=\sum_{k=1}^n e_{3^k}\otimes sx_ks.
$$
This is an element of $H^{E}(L^{\infty}(\Tdb)\ten sMs)$. Hence by
Theorem \ref{3Main}, we have an estimate
$$
\bignorm{(sx_ks)_k}_{\inf }\,\leq\, C  \norm{f_s}_E\leq
C\Bignorm{\sum_{k} e_{3^k}\otimes x_k}_{E}.
$$
Then applying Proposition \ref{2Duality} (2) and Proposition
\ref{2Sidon1}, we deduce the lower estimate (\ref{1K1}).

We now turn to the upper estimate (part (2)). We first assume that
$E$ is separable. For any $x_1,\ldots,x_n$ in $E(M)$ and
$y_1,\ldots, y_n$ in $E'(M)$, we have
$$
\Bigl\vert \sum_{k=1}^n\tau(x_k y_k)\Bigr\vert\,\leq
\,\bignorm{(x_k)_{k=1}^n}_{\max} \bignorm{(y_k)_{k=1}^n}_{\inf}.
$$
By assumption $q_E<\infty$ hence $p_{E'}>1$. Applying the first
part of Theorem \ref{1Main} to $E'$, we therefore  deduce that $$
\Bigl\vert \sum_{k=1}^n\tau(x_k y_k)\Bigr\vert\,\leq \,C
\bignorm{(x_k)_{k=1}^n}_{\max} \Bignorm{\sum_{k=1}^n
\varepsilon_k\otimes y_k}_{E'}
$$
for some constant $C\geq 0$ not depending either on $n$, the
$x_k$'s or the $y_k$'s. Now applying the first equivalence in
Remark \ref{2Kconvex} yields the result.

The proof in the case when $E$ is the dual of a separable
symmetric space is similar, using the second equivalence in Remark
\ref{2Kconvex}.
\end{proof}

\medskip\section{Examples}

In the noncommutative setting, it is well-known that the
quantities $\norm{\ }_{\max}$ and $\norm{\ }_{\inf}$ appearing in
Theorem \ref{1Main} are not equivalent in general. In Proposition
\ref{5Optimal} below we will show that the latter theorem cannot
be improved in general. For the time being we will consider
special cases when the estimates (\ref{1K1}) or  (\ref{1K2}) can
be replaced by an equivalence. Throughout we let $(M,\tau)$ be an
arbitrary semifinite von Neumann algebra, and we assume that $E$
is either separable or is the dual of a separable symmetric space.

\begin{corollary}\label{5Equiv1} \  \begin{itemize}
\item [(1)] Assume that $E\in {\rm Int}(L^1,L^2)$. Then
\begin{equation}\label{5Equiv11}
\Bignorm{\sum_k\varepsilon_k\otimes x_k}_{E}\,\approx\,
\bignorm{(x_k)_k}_{\inf}
\end{equation}
for finite families $(x_k)_k$ of $E(M)$. \item [(2)] Assume $E\in
{\rm Int}(L^2,L^q)$ for some $q<\infty$. Then
\begin{equation}\label{5Equiv12}
\Bignorm{\sum_k\varepsilon_k\otimes x_k}_{E}\,\approx\,
\bignorm{(x_k)_k}_{\max}
\end{equation}
for finite families $(x_k)_k$ of $E(M)$.
\end{itemize}
\end{corollary}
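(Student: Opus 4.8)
The plan is to prove each equivalence by pairing the relevant half of Theorem \ref{1Main} with an interpolation argument that supplies the reverse inequality; in each case that reverse inequality is the noncommutative analogue of the \emph{trivial} half of the Khintchine inequalities, obtained by interpolating endpoint estimates at $L^1,L^2$ (for part (1)) or $L^2,L^q$ (for part (2)). Both hypotheses first locate the Boyd indices via (\ref{2Encad}): $E\in{\rm Int}(L^1,L^2)$ gives $q_E\leq 2<\infty$, while $E\in{\rm Int}(L^2,L^q)$ gives $2\leq p_E\leq q_E\leq q<\infty$; each hypothesis also forces $E$ to be fully symmetric, and the standing assumption of the section (separable, or dual of separable) lets us invoke Theorem \ref{1Main}.

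For part (1), Theorem \ref{1Main}(1) applies since $q_E<\infty$, giving $\bignorm{(x_k)_k}_{\inf}\lesssim\Bignorm{\sum_k\varepsilon_k\otimes x_k}_E$. For the converse I establish the two one-sided bounds $\Bignorm{\sum_k\varepsilon_k\otimes x_k}_E\lesssim\bignorm{(x_k)_k}_c$ and $\Bignorm{\sum_k\varepsilon_k\otimes x_k}_E\lesssim\bignorm{(x_k)_k}_r$; then, writing $x_k=y_k+z_k$ and applying the column bound to $(y_k)_k$ and the row bound to $(z_k)_k$, a triangle inequality followed by an infimum over decompositions yields $\Bignorm{\sum_k\varepsilon_k\otimes x_k}_E\lesssim\bignorm{(x_k)_k}_{\inf}$. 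The column bound comes from Lemma \ref{2Interpolation2} with $p=1$, $q=2$, $N=L^\infty(\Omega)\ten M$, and $T\colon(x_k)_k\mapsto\sum_k\varepsilon_k\otimes x_k$, whose target norm $\norm{T(x_k)_k}_{E(N)}$ is exactly $\Bignorm{\sum_k\varepsilon_k\otimes x_k}_E$. The endpoint hypotheses hold up to a fixed (Khintchine) constant, which is harmless: at $L^2(N)$ one has $\Bignorm{\sum_k\varepsilon_k\otimes x_k}_{L^2(N)}=\bignorm{(x_k)_k}_c$ by orthonormality of the $\varepsilon_k$, while at $L^1(N)=L^1(\Omega;L^1(M))$ one has $\Bignorm{\sum_k\varepsilon_k\otimes x_k}_{L^1(N)}=\Bignorm{\sum_k\varepsilon_k\otimes x_k}_{\Rad{L^1}}\lesssim\bignorm{(x_k)_k}_{\inf}\leq\bignorm{(x_k)_k}_c$ by the Khintchine inequality (\ref{1Khintchine2}). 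The row bound is identical, using the row version of Lemma \ref{2Interpolation2}.

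For part (2), Theorem \ref{1Main}(2) applies since $p_E>1$ and $q_E<\infty$, giving $\Bignorm{\sum_k\varepsilon_k\otimes x_k}_E\lesssim\bignorm{(x_k)_k}_{\max}$. For the reverse estimate it suffices to dominate $\bignorm{(x_k)_k}_c$ and $\bignorm{(x_k)_k}_r$ separately by $\Bignorm{\sum_k\varepsilon_k\otimes x_k}_E$. I interpolate, via Proposition \ref{2Interpolation1}, the map $W\colon f\mapsto\sum_k E_{k1}\otimes\bigl(\int_\Omega\varepsilon_k f\,dm\bigr)$ sending $f\in L^2(N)+L^q(N)$ to the column matrix of its first-order Rademacher coefficients. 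At $L^2$, $W$ is a contraction (orthogonal projection $P$ onto the Rademacher span, composed with the isometric column embedding (\ref{2Square})). At $L^q$ with $2<q<\infty$, writing $c_k=\int_\Omega\varepsilon_k f\,dm$, one has $\norm{W(f)}_{L^q(M_n(M))}=\bignorm{(c_k)_k}_c\leq\bignorm{(c_k)_k}_{\max}\lesssim\Bignorm{\sum_k\varepsilon_k\otimes c_k}_{\Rad{L^q}}$ by (\ref{1Khintchine1}), and this last quantity equals $\norm{Pf}_{L^1(\Omega;L^q(M))}$, which by Kahane's inequality and the boundedness of $P\otimes I_{L^q(M)}$ on $L^q(\Omega;L^q(M))$ (the $K$-convexity input of Remark \ref{2Kconvex}) is $\lesssim\norm{f}_{L^q(N)}$. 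Proposition \ref{2Interpolation1} then gives $\bignorm{(c_k)_k}_c\lesssim\norm{f}_{E(N)}$; specializing to $f=\sum_k\varepsilon_k\otimes x_k$ yields the column bound, and the row bound is analogous. (Alternatively, since $q_{E'}=p_E/(p_E-1)\leq 2$, part (1) applies to $E'$, and combining $\Bignorm{\sum_k\varepsilon_k\otimes y_k}_{E'}\approx\bignorm{(y_k)_k}_{\inf}$ with the duality $(E(M)^n,\norm{\ }_{\max})^*=(E'(M)^n,\norm{\ }_{\inf})$ of Proposition \ref{2Duality}(1) and the dual description of $\Bignorm{\sum_k\varepsilon_k\otimes x_k}_E$ in Remark \ref{2Kconvex} recovers (\ref{5Equiv12}) directly.)

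The main obstacle is the reverse inequality in part (2). In part (1), Lemma \ref{2Interpolation2} is tailored so that the interpolated target norm is already $\Bignorm{\ }_E$, so no passage through a Bochner norm is needed. In part (2), by contrast, the $L^q$-endpoint estimate for $W$ must run through the Bochner average $\norm{\ }_{\Rad{L^q}}$, and recovering $\norm{\ }_{L^q(N)}$ from it genuinely requires \emph{both} Kahane's equivalence of Rademacher averages \emph{and} the boundedness of the Rademacher projection on $L^q(\Omega;L^q(M))$, i.e. the $K$-convexity of $L^q(M)$; this is precisely where $q<\infty$ (a nontrivial upper Boyd index) enters. The point demanding care is that the Khintchine, Kahane, and projection constants are all independent of $n$ and of $M$, so that Proposition \ref{2Interpolation1} produces a single constant uniform in $n$.
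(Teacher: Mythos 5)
Your proposal is correct and follows essentially the same route as the paper: part (1) is exactly the paper's argument (Lemma \ref{2Interpolation2} with endpoints $L^1$ and $L^2$ for the column and row bounds, hence the $\inf$-bound, combined with Theorem \ref{1Main}(1) for the converse since $q_E\leq 2$ by (\ref{2Encad})). For part (2) the paper only says the proof is ``similar'', and your interpolation of the coefficient-extraction map $W$ between $L^2$ and $L^q$ (using (\ref{1Khintchine1}) together with the Rademacher projection, i.e.\ $K$-convexity, at the $L^q$ endpoint) is precisely the intended argument, as confirmed by the analogous step in the proof of Theorem \ref{6Double1}.
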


\begin{proof}
We only prove part (1), the proof of (2) being similar. By the
easy part of the noncommutative Khintchine inequalities on
$L^1(M)$ (equivalently, by the $2$-concavity of $L^1$), we have
$$
\Bignorm{\sum_k\varepsilon_k\otimes x_k}_{L^1(\footnotesize{\Tdb};
L^1(M))}\, \leq\,\Bignorm{\Bigl(\sum_{k} x_k^*
x_k\Bigr)^{\frac{1}{2}}}_{L^1(M)}.
$$
for any finite family $(x_k)_k$ in $L^1(M)$. On the other hand we
obviously  have
$$
\Bignorm{\sum_k\varepsilon_k\otimes x_k}_{L^2(\footnotesize{\Tdb};
L^2(M))}\, = \Bigl(\sum_{k}
\norm{x_k}_{L^2(M)}^2\Bigr)^{\frac{1}{2}}\, =
\,\Bignorm{\Bigl(\sum_{k} x_k^* x_k\Bigr)^{\frac{1}{2}}}_{L^2(M)}
$$
for $x_k\in L^2(M)$. Applying Lemma \ref{2Interpolation2}, we
deduce an estimate
$$
\Bignorm{\sum_k\varepsilon_k\otimes x_k}_{E}\,
\lesssim\,\bignorm{(x_k)_{k}}_{c}
$$
for $x_k\in E(M)$. The same holds true with $\norm{\ }_r$ instead
of $\norm{\ }_c$, and these two estimates immediately imply that
$\bignorm{\sum_k \varepsilon_k \otimes x_k}_{E}\,
\lesssim\,\bignorm{(x_k)_{k}}_{\inf}$.

Our assumption ensures that $q_E\leq 2$, see (\ref{2Encad}). The
converse inequality is therefore given by the lower estimate in
Theorem \ref{1Main}.
\end{proof}

The next results should be compared with \cite{LPX}.

\begin{corollary}\label{5Equiv3} \  \begin{itemize}
\item [(1)] If $E$ is $2$-concave or $q_E<2$, then the equivalence
property (\ref{5Equiv11}) holds true. \item [(2)] Assume that
$q_E<\infty$. If $E$ is $2$-convex or $p_E>2$, then the
equivalence property (\ref{5Equiv12}) holds true.
\end{itemize}
\end{corollary}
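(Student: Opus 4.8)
The plan is to deduce both parts directly from Corollary \ref{5Equiv1}. Since $E$ is separable or the dual of a separable space, it is fully symmetric, which is the running hypothesis of Section 2 and is what Corollary \ref{5Equiv1} ultimately rests on. Consequently, once I show that the hypotheses of part (1) force $E\in{\rm Int}(L^1,L^2)$, the equivalence (\ref{5Equiv11}) is immediate from Corollary \ref{5Equiv1}(1); likewise, once the hypotheses of part (2) are seen to force $E\in{\rm Int}(L^2,L^q)$ for some $q<\infty$, the equivalence (\ref{5Equiv12}) is immediate from Corollary \ref{5Equiv1}(2). Thus the entire task is to read off the correct interpolation membership from the convexity/concavity and Boyd-index assumptions; the quantitative estimates of Theorem \ref{1Main} are already packaged inside Corollary \ref{5Equiv1}.

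For part (1) I would show that each hypothesis yields a \emph{lower $2$-estimate} for $E$ and then invoke the core fact that a fully symmetric space with a lower $2$-estimate lies in ${\rm Int}(L^1,L^2)$. If $E$ is $2$-concave it satisfies a lower $2$-estimate by definition (\cite[Sect. 1.d, 1.f]{LT2}). If instead $q_E<2$, then $E$ satisfies a lower $q$-estimate for some $q_E<q<2$ (the converse Boyd-index direction, \cite[Sect. 1.f]{LT2}), and since $\norm{\cdot}_{\ell^2}\leq\norm{\cdot}_{\ell^q}$ for $q<2$ this upgrades to a lower $2$-estimate. In both cases $E\in{\rm Int}(L^1,L^2)$, and part (1) follows.

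For part (2) I would separate the two hypotheses. When $p_E>2$ (and $q_E<\infty$ by assumption) the lower endpoint $L^2$ is interior, so Boyd's theorem (\cite[Thm. 7.3]{KM}, used exactly as in Remark \ref{2Kconvex}) applies with $2<p_E\leq q_E<q$ for any $q>q_E$, giving $E\in{\rm Int}(L^2,L^q)$ at once. When $E$ is merely $2$-convex I would instead write $E=G^{(2)}$, where $G$ is the $2$-concavification of $E$ (a symmetric space, using the $2$-convexity constant, cf. \cite[Sect. 1.d]{LT2}); then $q_G=q_E/2<\infty$, so for any finite $s>q_G$ the space $G$ has a lower $s$-estimate and hence $G\in{\rm Int}(L^1,L^s)$ by the same core fact. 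Applying the $2$-convexification functor, which carries the couple $(L^1,L^s)$ to $(L^2,L^{2s})$, yields $E=G^{(2)}\in{\rm Int}(L^2,L^{2s})$ with $2s<\infty$. The virtue of this detour is that the awkward boundary value $p_E=2$ is converted into $p_G=1$, i.e.\ into the \emph{extremal} lower endpoint $L^1$, where no strict Boyd inequality is needed.

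The one genuinely delicate point — and the main obstacle — is the core fact itself at the boundary, namely that a lower $2$-estimate already places $E$ in ${\rm Int}(L^1,L^2)$ even when $q_E=2$ (which is exactly the situation of a $2$-concave space with $q_E=2$). Here the open Boyd condition $q_E<2$ is unavailable, so one cannot simply interpolate; instead one must use the estimate together with its constant. I would establish it through the Calderón--Mityagin theory: $(L^1,L^2)$ is a Calderón couple, so its (exact) interpolation spaces are precisely the $K$-monotone ones (\cite{KPS}), and the lower $2$-estimate is what encodes the required $K$-functional monotonicity for the fully symmetric space $E$. Once this boundary lemma (and, through the duality and convexification used above, its upper counterpart) is in hand, everything else is routine bookkeeping of Boyd indices and of the behaviour of convexity under $2$-convexification.
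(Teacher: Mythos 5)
Your top-level reduction is exactly the paper's proof: the paper disposes of this corollary in two lines by observing that, by \cite[Thm. 7.3]{KM}, the hypotheses of (1) force $E\in{\rm Int}(L^1,L^2)$ and those of (2) force $E\in{\rm Int}(L^2,L^q)$ for some $q<\infty$, and then invoking Corollary \ref{5Equiv1}. All four memberships you need ($2$-concave; $q_E<2$; $2$-convex with $q_E<\infty$; $p_E>2$ with $q_E<\infty$) are directly covered by that citation, so the corollary itself is safe.

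The problems are in your attempt to re-derive the interpolation facts instead of citing them. First, your ``core fact'' is stated for a \emph{lower $2$-estimate}, which is strictly weaker than $2$-concavity (by \cite[Thm. 1.f.7]{LT2} a lower $2$-estimate only yields $r$-concavity for $r>2$), so you are claiming something stronger than the endpoint case of \cite[Thm. 7.3]{KM}; and your justification --- ``the lower $2$-estimate is what encodes the required $K$-functional monotonicity'' --- asserts exactly the hard point rather than proving it. Since the two hypotheses of (1) are individually covered ($q_E<2$ gives ${\rm Int}(L^1,L^2)$ by the Boyd-type theorem with extremal lower endpoint, and $2$-concavity together with the automatic $1$-convexity is the endpoint case of the citation), the unification through lower estimates buys nothing and introduces an unproved, possibly false, strengthening. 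Second, in part (2) the step ``the $2$-convexification functor carries the couple $(L^1,L^s)$ to $(L^2,L^{2s})$'' is not a formal operation on interpolation spaces: a linear $T$ bounded on $L^2$ and on $L^{2s}$ does not induce a linear operator on the concavified couple (the map $f\mapsto f\vert f\vert$ is nonlinear), so $G\in{\rm Int}(L^1,L^s)$ does not transfer to $E=G^{(2)}\in{\rm Int}(L^2,L^{2s})$ by ``applying the functor''; making this rigorous requires the power theorem for $K$-functionals together with Sparr's theorem that $(L^p,L^q)$ is a Calder\'on couple, i.e., essentially reproving \cite[Thm. 7.3]{KM}. The clean fix is simply to quote that theorem, as the paper does.
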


\begin{proof} This follows from the previous corollary.
Indeed by \cite[Thm. 7.3]{KM}, the assumption in (1) ensures that
$E\in{\rm Int}(L^1, L^2)$ whereas the assumption in (2) ensures
that $E\in{\rm Int}(L^2, L^q)$ for some $q<\infty$.
\end{proof}

\bigskip
It is well-known that the Rademacher averages
$\bignorm{\sum_k\varepsilon_k\otimes x_k}_{E}$ studied in the
present paper and the `classical' Rademacher averages
$\bignorm{\sum_k\varepsilon_k\otimes x_k}_{\Rad{E}}$ are not
equivalent in general. In fact, rather little is known on the
cases when
\begin{equation}\label{5G=T}
\Bignorm{\sum_k\varepsilon_k\otimes x_k}_{E}\,\approx\,
\Bignorm{\sum_k\varepsilon_k\otimes x_k}_{\Rad{E}},\qquad x_k\in
E(M).
\end{equation}
In the commutative setting, we have the following two
characterizations at our disposal. First it follows from
\cite[Prop. 2.d.1]{LT2} and \cite{AB} that $$
\Bignorm{\sum_k\varepsilon_k\otimes x_k}_{E}\,\approx\,
\Bignorm{\Bigl(\sum_k \vert x_k \vert^2 \Bigr)^{\frac{1}{2}}
}_{E(M)},\qquad x_k\in E(M),
$$
for all commutative $M$ if and only if $q_E<\infty$. Second, $$
\Bignorm{\sum_k\varepsilon_k\otimes
x_k}_{\Rad{E}}\,\approx\,\Bignorm{ \Bigl(\sum_k \vert
x_k\vert^2\Bigr)^{\frac{1}{2}}}_{E(M)},\qquad x_k\in E(M),
$$
for all commutative $M$ if and only if $E$ is $q$-concave for some
$q<\infty$ (see \cite[Cor. 1]{M} or \cite[Thm. 1.d.6 (i) and Thm.
1.f.12 (ii)]{LT2}).

We recall that if $E$ is $q$-concave, then $q_E\leq q$ (see e.g.
\cite[p. 132]{LT2}). Thus if $E$ is $q$-concave for some
$q<\infty$, (\ref{5G=T}) holds true when $M$ is commutative. On
the other hand,  consider $E=L^{r,\infty}$, with $1\leq r<\infty$.
Then $p_E=q_E =r$ but $E$ is $q$-concave for no finite $q$.
Consequently the equivalence (\ref{5G=T}) does not hold true in
general for that space $E$.

We do not know if (\ref{5G=T}) holds true for any $q$-concave $E$
(with $q<\infty$) and for any $M$. Combining Corollary
\ref{5Equiv3} and \cite{LPX}, we obtain classes of symmetric
spaces having this equivalence property.

\begin{corollary} Suppose  that either $E$ is 2-concave, or
$E$ is 2-convex and $q$-concave for some $q<\infty$. Then
(\ref{5G=T}) holds true for any $M$.
\end{corollary}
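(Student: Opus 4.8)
The plan is to prove each of the two cases by routing \emph{both} sides of (\ref{5G=T}) through a common intrinsic quantity and then concluding by transitivity of $\approx$; in the first case this quantity is $\bignorm{(x_k)_k}_{\inf}$ and in the second it is $\bignorm{(x_k)_k}_{\max}$.

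First I would treat the case where $E$ is $2$-concave. Here Corollary \ref{5Equiv3}(1) applies and gives $\Bignorm{\sum_k\varepsilon_k\otimes x_k}_{E}\approx\bignorm{(x_k)_k}_{\inf}$ for all finite families $(x_k)_k$ in $E(M)$ and all $M$. On the classical side, the noncommutative Khintchine inequality of \cite{LPX} (namely the extension of (\ref{1Khintchine2}) to an arbitrary $2$-concave $E$) gives $\Bignorm{\sum_k\varepsilon_k\otimes x_k}_{\Rad{E}}\approx\bignorm{(x_k)_k}_{\inf}$. Combining these two equivalences yields (\ref{5G=T}) in this case.

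Next I would treat the case where $E$ is $2$-convex and $q$-concave for some $q<\infty$. Since $q$-concavity forces $q_E\leq q<\infty$ (as recalled just before this corollary), the hypotheses of Corollary \ref{5Equiv3}(2) are satisfied, and we obtain $\Bignorm{\sum_k\varepsilon_k\otimes x_k}_{E}\approx\bignorm{(x_k)_k}_{\max}$. At the same time, the noncommutative Khintchine inequality of \cite{LPX} (the extension of (\ref{1Khintchine1}) to an $E$ which is $2$-convex and $q$-concave) gives $\Bignorm{\sum_k\varepsilon_k\otimes x_k}_{\Rad{E}}\approx\bignorm{(x_k)_k}_{\max}$, and once again combining the two equivalences produces (\ref{5G=T}).

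The hard part is essentially nonexistent, since all the substantive work has already been carried out in Corollary \ref{5Equiv3} and in \cite{LPX}; the proof is a formal concatenation of known equivalences. The only point deserving a moment's attention is verifying, in the second case, that $q$-concavity indeed supplies the hypothesis $q_E<\infty$ required to invoke Corollary \ref{5Equiv3}(2).
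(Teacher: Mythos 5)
Your proof is correct and follows exactly the route the paper intends: the text introducing this corollary says it is obtained by ``combining Corollary \ref{5Equiv3} and \cite{LPX}'', which is precisely your concatenation of the two equivalences through $\bignorm{(x_k)_k}_{\inf}$ (respectively $\bignorm{(x_k)_k}_{\max}$), including the observation that $q$-concavity yields $q_E\leq q<\infty$.
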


The paper \cite{DS} contains two classes of spaces $E$ satisfying
(\ref{5G=T}). On the one hand, it is shown that all reflexive
Orlicz spaces on $(0,\infty)$ have this property. On the other
hand, for any $1<p,q<\infty$ and any $\gamma\in\Rdb$, the
Lorentz-Zygmund space $L^{p,q}({\rm Log}L)^\gamma$ is also shown
to satisfy (\ref{5G=T}). All these spaces have non trivial Boyd
indices. Combining with Theorem \ref{1Main}, we derive that for
all such spaces $E$, we have estimates
\begin{equation}\label{5Genuine}
\bignorm{(x_k)_k}_{\inf}\,\lesssim\,\Bignorm{\sum_k\varepsilon_k\otimes
x_k}_{\Rad{E}}\,\lesssim\, \bignorm{(x_k)_k}_{\max},\qquad x_k\in
E(M).
\end{equation}

\bigskip We conclude this section by showing a certain optimality
of Theorem \ref{1Main} and of (\ref{5Genuine}). For any $p\geq 1$,
we let $S^p = L^p(B(\ell^2))$ denote the Schatten $p$-class on
$\ell^2$ and we let $S^p_m =  L^p(M_m)$ be its finite dimensional
version. It is well-known (and easy to check) that for any $p\not=
2$,
\begin{equation}\label{5Not}
\bignorm{(x_k)_k}_{\inf}\not\approx \bignorm{(x_k)_k}_{\max}\qquad
\hbox{on} \ S^p.
\end{equation}

\begin{proposition}\label{5Optimal} Let $E=L^p(0,\infty)\cap L^q(0,\infty)$,
with $1<p<2<q<\infty$.
\begin{itemize}
\item [(1)] $E$ satisfies the equivalence property (\ref{5G=T}).
\item [(2)] There exist a semifinite von Neumann algebra $M$ and
infinite dimensional subspaces $Y,Z\subset E(M)$ such that
$$
\Bignorm{\sum_{k} \varepsilon_k\otimes x_k}_{\Rad{E}}\,\approx\,
\bignorm{(x_k)_k}_{\max}\qquad\hbox{and}\qquad
\bignorm{(x_k)_k}_{\max}\not\approx \bignorm{(x_k)_k}_{\inf}
$$
for finite families $(x_k)_k$ of $Y$, whereas
$$
\Bignorm{\sum_{k}\varepsilon_k\otimes
x_k}_{\Rad{E}}\,\approx\,\bignorm{(x_k)_k}_{\inf}
\qquad\hbox{and}\qquad \bignorm{(x_k)_k}_{\inf}\not\approx
\bignorm{(x_k)_k}_{\max}
$$
for finite families $(x_k)_k$ of $Z$.
\end{itemize}
\end{proposition}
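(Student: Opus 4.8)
The plan for part (1) is to exploit that the intersection norm of $E=L^p\cap L^q$ satisfies $\norm{\cdot}_E\approx\max(\norm{\cdot}_{L^p},\norm{\cdot}_{L^q})$, so that both averages under comparison split into an $L^p$- and an $L^q$-contribution. First I would use that $L^r(L^\infty(\Omega)\ten M)=L^r(\Omega;L^r(M))$ to write the $E$-average $\Bignorm{\sum_k\varepsilon_k\otimes x_k}_E$ as the maximum of the norms of $\sum_k\varepsilon_k\otimes x_k$ in $L^p(\Omega;L^p(M))$ and in $L^q(\Omega;L^q(M))$, each of which, by Kahane's inequality, is equivalent to the corresponding average $\Bignorm{\sum_k\varepsilon_k\otimes x_k}_{\Rad{L^p}}$, resp. $\Bignorm{\sum_k\varepsilon_k\otimes x_k}_{\Rad{L^q}}$. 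The classical average $\Bignorm{\sum_k\varepsilon_k\otimes x_k}_{\Rad{E}}$ splits the same way: pointwise in $\Theta$ one has $\norm{\sum_k\varepsilon_k(\Theta)x_k}_{E(M)}\approx\norm{\sum_k\varepsilon_k(\Theta)x_k}_{L^p(M)}+\norm{\sum_k\varepsilon_k(\Theta)x_k}_{L^q(M)}$, and integrating produces the sum of the two classical averages. Since $a+b\approx\max(a,b)$, both quantities are equivalent to $\max\bigl(\Bignorm{\cdot}_{\Rad{L^p}},\Bignorm{\cdot}_{\Rad{L^q}}\bigr)$, which proves (\ref{5G=T}).

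For part (2) I write $N_E=\Bignorm{\sum_k\varepsilon_k\otimes x_k}_E$ and abbreviate by $\inf_E,\max_E$ the quantities $\bignorm{(x_k)_k}_{\inf},\bignorm{(x_k)_k}_{\max}$, and by $\inf_r,\max_r,c_r,r_r$ their $L^r(M)$-analogues. By part (1) it suffices to produce the stated equivalences with $\Bignorm{\cdot}_{\Rad{E}}$ replaced by $N_E$. The engine is the formula $N_E\approx\max(\inf_p,\max_q)$, obtained by combining the splitting of part (1) with the noncommutative Khintchine inequalities ($\Rad{L^p}\approx\inf_p$ since $p<2$, and $\Rad{L^q}\approx\max_q$ since $q>2$); this is consistent with the general bracketing $\inf_E\lesssim N_E\lesssim\max_E$ coming from Theorem~\ref{1Main} (applicable since $E$ is separable with $p_E=p>1$ and $q_E=q<\infty$). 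The construction rests on the observation that the finiteness of the trace decides which of $L^p,L^q$ dominates the column and row norms: on an infinite trace the $L^p$-quantities dominate and force $N_E$ into the $\inf$-regime, whereas on a finite normalized trace the $L^q$-quantities dominate and force $N_E$ into the $\max$-regime. Accordingly I take $M=B(\ell^2)\oplus N$ with $N$ a finite algebra normalized by $\tau(1)=1$, placing $Z$ in the $B(\ell^2)$-summand and $Y$ in $N$.

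For $Z$ I take the first-column subspace of $B(\ell^2)$. For a finite family $(x_j)$ in $Z$, written $x_j=|a_j\rangle$ with $A=[a_1\,|\,a_2\,|\,\cdots]$, the operator $\sum_j x_j^*x_j=\norm{A}_{S^2}^2\,e_{11}$ is rank one, so $c_E=\norm{A}_{S^2}$, while $\sum_j x_jx_j^*=AA^*$ gives $r_E=\norm{A}_{S^p}$ and hence $\max_E=\norm{A}_{S^p}$. Since each Rademacher sum $\sum_j\varepsilon_j(\Theta)x_j$ is again a column vector, its $S^r$-norm equals its $\ell^2$-norm, so Kahane's inequality yields $N_E\approx\bigl(\sum_j\norm{a_j}_{\ell^2}^2\bigr)^{1/2}=\norm{A}_{S^2}$. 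The same rank-one computation in $L^p$ gives $\inf_p\approx\Rad{L^p}\approx\norm{A}_{S^2}$; together with the trivial bound $\inf_E\le c_E=\norm{A}_{S^2}$ and the inequality $\inf_E\ge\inf_p$ (valid since $\norm{\cdot}_E\ge\norm{\cdot}_{L^p}$) this gives $\inf_E\approx\norm{A}_{S^2}\approx N_E$. Thus $N_E\approx\inf_E$ on $Z$, while choosing $A=I_n$ shows $\max_E/\inf_E=\norm{I_n}_{S^p}/\norm{I_n}_{S^2}=n^{1/p-1/2}\to\infty$, so $\inf\not\approx\max$ there.

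For $Y$ I would first record a finite-trace principle: when $\tau(1)=1$ the map $r\mapsto\norm{T}_{L^r}$ is nondecreasing on positive $T$, whence $c_p\le c_q$ and $r_p\le r_q$, so that $\max_E=\max_q$ and $\max(\inf_p,\max_q)=\max_q$; hence $N_E\approx\max_E$ for every family in $E(N)$. It then remains only to witness $\max_E/\inf_E\to\infty$. The main obstacle is that $Y$ cannot live in $B(\ell^2)$: on an infinite trace every column or row subspace is pinned to the $\inf$-regime, so one is forced to engineer a finite algebra whose normalization keeps the row norm bounded while the column (hence $\max$) norm blows up. Concretely I take $N=\bigoplus_m M_{n_m}$ with trace $\bigoplus_m 2^{-m}\tau_m$, where $\tau_m$ is the normalized trace on $M_{n_m}$, so that $\tau(1)=1$ and $n_m\to\infty$, and let $Y$ be the closed span of the first-column matrix units $f^{(m)}_k=e_{k1}\in M_{n_m}$ over all blocks. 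A family supported in a single block $m$ has $\max_E=\max_q\approx 2^{-m/q}n_m^{1/2-1/q}$ while $\inf_E\le r_E=2^{-m/q}$, giving ratio $n_m^{1/2-1/q}\to\infty$. Checking that these block-wise estimates assemble on the whole infinite-dimensional $Y$---using orthogonality of the blocks together with the finite-trace principle for all families in $N$---is the delicate remaining point; a final appeal to part (1) converts every $N_E$-equivalence into the stated $\Bignorm{\cdot}_{\Rad{E}}$-equivalence.
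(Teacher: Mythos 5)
Your part (1) is essentially the paper's own argument: split the $L^p\cap L^q$ norm into its two components, apply Khintchine--Kahane to each, and reassemble. Part (2) is correct but follows a genuinely different route. The paper takes $M=R\oplus^\infty B(\ell^2)$ with $R$ the hyperfinite $II_1$ factor, sets $Y=L^q(R)$ and $Z=S^p$ wholesale, observes that the $E$-norm collapses to the $L^q$-norm on $Y$ and to the $S^p$-norm on $Z$ (so the classical noncommutative Khintchine inequalities give the two equivalences directly), and obtains the non-equivalences from the known failure of $\norm{\ }_{\inf}\approx\norm{\ }_{\max}$ on $S^r$, $r\neq 2$, transported into $L^q(R)$ via Pisier's completely isometric embeddings $J_m\colon S^q_m\to L^q(R)$, which preserve $\norm{\ }_c$ and $\norm{\ }_r$ exactly. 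You instead build explicit column-type subspaces -- $Z$ the first-column subspace of $B(\ell^2)$, $Y$ the span of column matrix units in $\bigoplus_m M_{n_m}$ with weights $2^{-m}$ -- and compute everything by hand with rank-one operators, driven by your formula $\Bignorm{\sum_k\varepsilon_k\otimes x_k}_E\approx\max\bigl(\bignorm{(x_k)_k}_{\inf,p},\bignorm{(x_k)_k}_{\max,q}\bigr)$ and the finite-trace monotonicity $\norm{T}_{L^p}\leq\norm{T}_{L^q}$. Your version is more elementary (no hyperfinite factor, no operator-space embedding), at the cost of producing smaller, Hilbertian subspaces; the paper's version yields the full spaces $L^q(R)$ and $S^p$ with one appeal to \cite{P2}. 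One remark: your closing worry about ``assembling'' the block-wise estimates on $Y$ is unfounded. The equivalence $\Bignorm{\sum_k\varepsilon_k\otimes x_k}_{\Rad{E}}\approx\bignorm{(x_k)_k}_{\max}$ already holds for \emph{every} finite family of $E(N)$ by your finite-trace principle together with part (1), and the non-equivalence $\bignorm{(x_k)_k}_{\max}\not\approx\bignorm{(x_k)_k}_{\inf}$ only requires a witnessing sequence of finite families, which your single-block families supply; nothing further needs to be checked across blocks. With that observation your argument is complete.
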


\begin{proof}
Note that $E(M)=L^p(M)\cap L^q(M)$. Then using the
Khintchine-Kahane inequality, we have for  a finite family
$(x_k)_k$ of $E(M)$
\begin{align*}
\Bignorm{\sum_k \varepsilon_k\otimes x_k}_{E(\Omega;E(M))}\, &
\approx\, \Bignorm{\sum_k \varepsilon_k\otimes x_k}_{L^p(\Omega;
L^p(M))} + \Bignorm{\sum_k \varepsilon_k\otimes x_k}_{L^q(\Omega;
L^q(M))}\\ & \approx \, \Bignorm{\sum_k \varepsilon_k\otimes
x_k}_{{\rm Rad}(L^p(M))} + \Bignorm{\sum_k \varepsilon_k\otimes
x_k}_{{\rm Rad}(L^q(M))}\\  & \approx \, \Bignorm{\sum_k
\varepsilon_k\otimes x_k}_{{\rm Rad}(L^p(M)\cap L^q(M))}.
\end{align*}
This proves (1).

To show (2), let $R$ be the hyperfinite $II_1$ factor. and let $$
M=R\mathop{\oplus}\limits^\infty B(\ell^2).
$$
Since the trace on $R$ is normalized, we have $L^q(R)\subset
L^p(R)$ with $\norm{\ }_p\leq \norm{\ }_q$ on $L^q(R)$. On the
other hand, $S^p\subset S^q$ with  $\norm{\ }_q\leq \norm{\ }_p$
on $S^p$.  Consequently, we have a topological direct sum
decomposition
$$
E(M) = L^q(R)\oplus S^p.
$$
Then take $Y=L^q(R)\oplus(0)$ and $Z=(0)\oplus S^p$.  For any
integer $m\geq 1$, there is a completely isometric embedding
$$
J_m\colon S^q_m\longrightarrow L^q(R),
$$
in the sense of \cite{P2} (see also \cite{P3}). In particular for
any $n\geq 1$ and any $x_1,\ldots, x_n\in S^q_m$, we have
$$
\bignorm{(J_m(x_k))_k}_c = \Bignorm{(I_{S^q_n}\otimes
J_m)\Bigl(\sum_{k=1}^n E_{k1}\otimes x_k\Bigr)}_{L^q(M_n(R))}
=\Bignorm{ \sum_{k=1}^n E_{k1}\otimes x_k}_{L^q(M_n\otimes M_m)} =
\bignorm{(x_k)_k}_c.
$$
The same holds true with $\norm{\ }_r$ instead of $\norm{\ }_c$
and we deduce that $$ \bignorm{(J_m(x_k))_k}_{\inf} =
\bignorm{(x_k)_k}_{\inf} \qquad\hbox{and}\qquad
\bignorm{(J_m(x_k))_k}_{\max} = \bignorm{(x_k)_k}_{\max}.
$$
By means of these equalities and (\ref{5Not}), we obtain that $Y$
and $Z$ have the properties stated in the proposition.
\end{proof}

\medskip\section{Double sums}
Let $E$ and $M$ as in Section 2. Let $(x_{ij})_{1\leq i,j\leq n}$
be a doubly indexed family of some $E(M)$. In this section we will
be interested in the double Rademacher average $\sum_{i,j}
\varepsilon_i\otimes \varepsilon_j\otimes x_{ij}$. Extending the
definitions of Section 2, we let $$ \Bignorm{\sum_{i,j=1}^n
\varepsilon_i\otimes \varepsilon_j\otimes x_{ij}}_{E} $$ denote
the norm of this sum in the noncommutative space
$E(L^{\infty}(\Omega)\ten L^{\infty}(\Omega)\ten M)$. The analysis
of this norm requires more definitions. We will use the matrix
notation $[x_{ij}]$ to denote the element $\sum_{i,j=1}^n
E_{ij}\otimes x_{ij}\,$ of $E(M_n(M))$. Accordingly we will write
$$
\bignorm{[x_{ij}]}_E\, =\, \Bignorm{\sum_{i,j=1}^n E_{ij}\otimes
x_{ij}}_{E(M_n(M))}
$$
for the norm of this matrix in $E(M_n(M))$. Also we extend the
notation (\ref{1Square}) to doubly indexed families by writing
$$
\bignorm{(x_{ij})_{i,j}}_c\,=\,\Bignorm{\Bigl(\sum_{i,j=1}^n
x_{ij}^* x_{ij}\Bigr)^{\frac{1}{2}}}_{E(M)}\qquad \hbox{and}\qquad
\bignorm{(x_{ij})_{i,j}}_r\,=\,\Bignorm{\Bigl(\sum_{i,j=1}^n
x_{ij}x_{ij}^* \Bigr)^{\frac{1}{2}}}_{E(M)}.
$$
Finally we introduce $\max$ and $\inf$ norms as follows. First we
let
$$
\bignorm{(x_{ij})_{i,j}}_{\max}\, =\,
\max\bigl\{\bignorm{[x_{ij}]}_E,\, \bignorm{[x_{ji}]}_E , \,
\bignorm{(x_{ij})_{i,j}}_c,\, \bignorm{(x_{ij})_{i,j}}_r\bigr\}.
$$
Second we let
$$
\bignorm{(x_{ij})_{i,j}}_{\inf}\, =\,
\inf\bigl\{\bignorm{[a_{ij}]}_E + \bignorm{[b_{ji}]}_E +
\bignorm{(c_{ij})_{i,j}}_c + \bignorm{(d_{ij})_{i,j}}_r\bigr\},
$$
where the infimum runs over all $4$-tuples $(a_{ij})_{i,j}$,
$(b_{ij})_{i,j}$, $(c_{ij})_{i,j}$, and $(d_{ij})_{i,j}$ of
families of $E(M)$ such that $x_{ij} = a_{ij} + b_{ij} + c_{ij} +
d_{ij}$ for any $1\leq i,j\leq n$.

Such norms were introduced in \cite{HP} and \cite{P2} on
noncommutative $L^p$-spaces and the following equivalence
properties hold. If $2\leq p<\infty$,
\begin{equation}\label{6DbK1} \Bignorm{\sum_{i,j}
\varepsilon_i\otimes \varepsilon_j\otimes
x_{ij}}_{L^p(\Omega\times\Omega; L^p(M))} \,\approx\,
\bignorm{(x_{ij})_{i,j}}_{\max},\qquad x_{ij}\in L^p(M),
\end{equation}
and if $1\leq p\leq 2$,
\begin{equation}\label{6DbK2}
\Bignorm{\sum_{i,j} \varepsilon_i\otimes \varepsilon_j\otimes
x_{ij}}_{L^p(\Omega\times\Omega; L^p(M))} \,\approx\,
\bignorm{(x_{ij})_{i,j}}_{\inf},\qquad x_{ij}\in L^p(M).
\end{equation}
See \cite[Sect. 3]{HP} and \cite[Rem. 9.8.9]{P2} for proofs and
more remarks. The main purpose of this section is to extend
(\ref{6DbK1}) (resp. (\ref{6DbK2})) to the case when $L^p$ is
replaced by a 2-convex space such that $q_E<\infty$ (resp. a
2-concave space $E$). We will repeatedly use the following simple
lemma.

\begin{lemma}\label{6lem} For any family $(x_{ij})_{1\leq i,j\leq n}$ in $E(M)$,
 we have
$$
\Bignorm{\sum_{i,j=1}^n E_{ij}\otimes x_{ij}}_{E(M_n\otimes M)}\,
=\, \Bignorm{\sum_{i,j=1}^n E_{i1}\otimes E_{1j}\otimes
x_{ij}}_{E(M_n\otimes M_n\otimes M)}.
$$
\end{lemma}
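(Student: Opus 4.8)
The plan is to prove the identity at the level of singular value functions, using that the $E$-norm of an element depends only on its singular value function, hence only on its absolute value. Write $a=\sum_{i,j}E_{ij}\otimes x_{ij}$ for the element on the left-hand side (living in $E(M_n\otimes M)$, equipped with $tr\otimes\tau$) and $b=\sum_{i,j}E_{i1}\otimes E_{1j}\otimes x_{ij}$ for the element on the right-hand side (living in $E(M_n\otimes M_n\otimes M)$, equipped with $tr\otimes tr\otimes\tau$). Since $\norm{a}_E=\norm{\,\vert a\vert\,}_E$ and $\norm{b}_E=\norm{\,\vert b\vert\,}_E$, it suffices to compare the two absolute values.

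First I would compute $b^*b$ directly. Using $E_{1i}E_{k1}=\delta_{ik}E_{11}$ in the first matrix factor and $E_{j1}E_{1l}=E_{jl}$ in the second, one obtains
\begin{equation*}
b^*b=\sum_{i,j,k,l}\delta_{ik}\,E_{11}\otimes E_{jl}\otimes x_{ij}^*x_{kl}=E_{11}\otimes\Bigl(\sum_{j,l}E_{jl}\otimes\sum_i x_{ij}^*x_{il}\Bigr)=E_{11}\otimes(a^*a),
\end{equation*}
where in the last equality I identify $a^*a=\sum_{j,l}E_{jl}\otimes\sum_i x_{ij}^*x_{il}$ as an element of $M_n\otimes M$. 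Since $E_{11}$ is a projection, $(E_{11}\otimes c)^{\frac{1}{2}}=E_{11}\otimes c^{\frac{1}{2}}$ for any $c\geq 0$, so taking square roots gives $\vert b\vert=E_{11}\otimes\vert a\vert$.

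It then remains to observe that tensoring on the left with the trace-one projection $E_{11}$ leaves singular value functions unchanged, which is exactly the principle already invoked for the identity $\norm{E_{ij}\otimes x}_E=\norm{x}_E$: for any $\lambda>0$ the spectral projection $\mathbf{1}_{(\lambda,\infty)}(E_{11}\otimes\vert a\vert)$ equals $E_{11}\otimes\mathbf{1}_{(\lambda,\infty)}(\vert a\vert)$, and since $tr(E_{11})=1$ its trace equals $(tr\otimes\tau)(\mathbf{1}_{(\lambda,\infty)}(\vert a\vert))$; thus the distribution functions of $\vert b\vert$ and $\vert a\vert$ coincide, whence $\mu(b)=\mu(a)$ and therefore $\norm{b}_E=\norm{\mu(b)}_E=\norm{\mu(a)}_E=\norm{a}_E$. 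The only point needing care, and what I would regard as the main (minor) obstacle, is this bookkeeping of distribution functions across the two different von Neumann algebras and traces; everything else reduces to the one-line matrix computation of $b^*b$ above.
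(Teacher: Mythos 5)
Your proof is correct and is essentially the paper's argument: the paper simply asserts that the two elements have the same distribution function (hence the same singular value function and the same $E$-norm), and your computation $b^*b=E_{11}\otimes(a^*a)$, $\vert b\vert=E_{11}\otimes\vert a\vert$ together with $tr(E_{11})=1$ is exactly the verification behind that assertion. Nothing is missing.
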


\begin{proof}
Indeed, the two elements $z_1=\sum_{i,j=1}^n E_{ij}\otimes x_{ij}$
and $z_2=\sum_{i,j=1}^n E_{i1}\otimes E_{1j}\otimes x_{ij}$ have
the same distribution function, hence $\mu(z_1)=\mu(z_2)$.
\end{proof}

We start with a general result and the 2-convex case.

\begin{proposition}\label{6Upper} Assume that $E$ is separable,
or that $E$ is the dual of a separable symmetric function space.
Assume further that $q_E<\infty$ and $p_E>1$. Then we have $$
\bignorm{(x_{ij})_{i,j}}_{\inf}\,\lesssim\, \Bignorm{\sum_{i,j}
\varepsilon_i\otimes \varepsilon_j\otimes x_{ij}}_{E}\, \lesssim\,
\bignorm{(x_{ij})_{i,j}}_{\max}
$$
for finite families $(x_{ij})_{i,j}$ of $E(M)$.
\end{proposition}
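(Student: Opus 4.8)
The plan is to reduce everything to the single-variable Theorem~\ref{1Main}, applied twice, treating one Rademacher variable at a time. Throughout write $X=[x_{ij}]=\sum_{i,j}E_{ij}\otimes x_{ij}\in E(M_n(M))$ and regard the double sum as an outer Rademacher average over the first variable with coefficients
$$
y_i\,=\,\sum_{j=1}^n\varepsilon_j\otimes x_{ij}\ \in\ E(L^\infty(\Omega)\ten M).
$$
First I would apply Theorem~\ref{1Main}(2) to the semifinite algebra $L^\infty(\Omega)\ten M$ (the hypotheses $q_E<\infty$ and $p_E>1$ bear on $E$, hence survive replacing $M$). Since $E\bigl(L^\infty(\Omega)\ten(L^\infty(\Omega)\ten M)\bigr)=E(L^\infty(\Omega)\ten L^\infty(\Omega)\ten M)$, this gives
$$
\Bignorm{\sum_{i,j}\varepsilon_i\otimes\varepsilon_j\otimes x_{ij}}_{E}\,=\,\Bignorm{\sum_i\varepsilon_i\otimes y_i}_{E}\,\lesssim\,\max\bigl\{\bignorm{(y_i)_i}_{c}\,,\,\bignorm{(y_i)_i}_{r}\bigr\},
$$
where the column and row norms are computed in $E(L^\infty(\Omega)\ten M)$.

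The heart of the argument is to identify these two norms with the four quantities defining $\bignorm{(x_{ij})_{i,j}}_{\max}$, by a second use of Theorem~\ref{1Main}. Using (\ref{2Square}) I would rewrite
$$
\bignorm{(y_i)_i}_{c}\,=\,\Bignorm{\sum_i E_{i1}\otimes y_i}_{E(M_n(L^\infty(\Omega)\ten M))}\,=\,\Bignorm{\sum_j\varepsilon_j\otimes\xi_j}_{E}\qquad\hbox{with}\qquad \xi_j=\sum_i E_{i1}\otimes x_{ij}\in E(M_n(M)),
$$
the last norm being computed in $E(L^\infty(\Omega)\ten M_n(M))$ after the harmless reordering of tensor factors. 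Applying Theorem~\ref{1Main}(2) over $M_n(M)$ bounds this by $\max\{\bignorm{(\xi_j)_j}_c,\bignorm{(\xi_j)_j}_r\}$ in $E(M_n(M))$. A direct computation gives $\sum_j\xi_j^*\xi_j=E_{11}\otimes\sum_{i,j}x_{ij}^*x_{ij}$, so that $\bignorm{(\xi_j)_j}_c=\bignorm{(x_{ij})_{i,j}}_c$; and, crucially, $\sum_j\xi_j\xi_j^*=XX^*$, so that $\bignorm{(\xi_j)_j}_r=\norm{\vert X^*\vert}_{E(M_n(M))}=\bignorm{[x_{ij}]}_E$. Thus $\bignorm{(y_i)_i}_c\lesssim\max\{\bignorm{(x_{ij})_{i,j}}_c,\bignorm{[x_{ij}]}_E\}$. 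Treating $\bignorm{(y_i)_i}_r$ symmetrically with $\eta_j=\sum_i E_{1i}\otimes x_{ij}$ (now $\sum_j\eta_j^*\eta_j$ reconstitutes the transposed matrix, yielding $\bignorm{[x_{ji}]}_E$, while $\sum_j\eta_j\eta_j^*=E_{11}\otimes\sum_{i,j}x_{ij}x_{ij}^*$, yielding $\bignorm{(x_{ij})_{i,j}}_r$) gives $\bignorm{(y_i)_i}_r\lesssim\max\{\bignorm{[x_{ji}]}_E,\bignorm{(x_{ij})_{i,j}}_r\}$. Combining the two rounds yields the right-hand estimate $\lesssim\bignorm{(x_{ij})_{i,j}}_{\max}$.

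For the left-hand estimate I would argue by duality, exactly as Theorem~\ref{1Main}(2) was deduced from Theorem~\ref{1Main}(1) for $E'$. First I would record the double-sum analog of Proposition~\ref{2Duality}(1): the pairing $\langle(x_{ij}),(y_{ij})\rangle=\sum_{i,j}\tau(x_{ij}y_{ij})$ identifies $(E(M)^{n^2},\norm{\ }_{\inf})^*$ with $(E'(M)^{n^2},\norm{\ }_{\max})$, proved as in Proposition~\ref{2Duality} from the matrix duality $E(M_n(M))^*=E'(M_n(M))$ together with the standard sum/intersection duality. Given $x_{ij}\in E(M)$, choose $y_{ij}\in E'(M)$ with $\bignorm{(y_{ij})_{i,j}}_{\max}\leq 1$ and $\sum_{i,j}\tau(x_{ij}y_{ij})\geq\bignorm{(x_{ij})_{i,j}}_{\inf}-\varepsilon$. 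Since $\{\varepsilon_i\otimes\varepsilon_j\}$ is orthonormal in $L^2(\Omega\times\Omega)$, the K\"othe duality pairing between $E(L^\infty(\Omega)\ten L^\infty(\Omega)\ten M)$ and its dual gives
$$
\sum_{i,j}\tau(x_{ij}y_{ij})\,=\,\Bigl\langle\sum_{i,j}\varepsilon_i\otimes\varepsilon_j\otimes x_{ij}\,,\,\sum_{i,j}\varepsilon_i\otimes\varepsilon_j\otimes y_{ij}\Bigr\rangle\,\leq\,\Bignorm{\sum_{i,j}\varepsilon_i\otimes\varepsilon_j\otimes x_{ij}}_E\,\Bignorm{\sum_{i,j}\varepsilon_i\otimes\varepsilon_j\otimes y_{ij}}_{E'}.
$$
Because $p_{E'}>1$ and $q_{E'}<\infty$, the right-hand estimate already proved (applied to $E'$) bounds the second factor by $\bignorm{(y_{ij})_{i,j}}_{\max}\leq 1$; letting $\varepsilon\to 0$ then yields $\bignorm{(x_{ij})_{i,j}}_{\inf}\lesssim\bignorm{\sum_{i,j}\varepsilon_i\otimes\varepsilon_j\otimes x_{ij}}_E$.

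I expect the only delicate point to be the bookkeeping in the second paragraph: one must verify that the two successive column/row splittings produce precisely the four quantities $\bignorm{(x_{ij})_{i,j}}_c$, $\bignorm{(x_{ij})_{i,j}}_r$, $\bignorm{[x_{ij}]}_E$, $\bignorm{[x_{ji}]}_E$, the decisive identity being $\sum_j\xi_j\xi_j^*=XX^*$, which turns a row norm of the columns back into the full matrix norm. The duality step is then routine once the $\inf$--$\max$ duality of Proposition~\ref{2Duality}(1) is transcribed to doubly indexed families.
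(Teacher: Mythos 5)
Your proposal is correct and follows essentially the same route as the paper: the upper estimate by the same reiteration of Theorem \ref{1Main}(2) in each Rademacher variable separately (your operator identities $\sum_j\xi_j\xi_j^*=XX^*$, etc., are just the paper's Lemma \ref{6lem} bookkeeping in disguise, and they do produce exactly the four quantities in $\norm{(x_{ij})_{i,j}}_{\max}$), and the lower estimate by dualizing against $(E'(M)^{n^2},\norm{\ }_{\max})$, which is precisely the argument the paper sketches and omits. Your filled-in duality step is sound, since in this direction the K\"othe--H\"older inequality suffices and no Rademacher projection is needed.
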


\begin{proof} The upper estimate is a simple reiteration argument.
For any $x_{ij}\in E(M)$, we write $$ \sum_{i,j}
\varepsilon_i\otimes \varepsilon_j\otimes x_{ij} = \sum_i
\varepsilon_i\otimes z_i,\qquad\hbox{with}\qquad\ z_i=\sum_j
\varepsilon_j\otimes x_{ij},
$$
and we apply the upper estimate of Theorem \ref{1Main} on
$L^{\infty}(\Omega)\ten M$. We find that \begin{align*}
\Bignorm{\sum_{i,j} \varepsilon_i \otimes \varepsilon_j\otimes
x_{ij}}_{E}\, & \lesssim\, \Bignorm{\sum_i E_{1i}\otimes z_i}_E\,
+\,\Bignorm{\sum_i E_{i1}\otimes z_i}_E \\ &  = \, \Bignorm{\sum_j
\varepsilon_j \otimes \Bigl(\sum_i E_{1i}\otimes
x_{ij}\Bigr)}_{E}\, +\, \Bignorm{\sum_j \varepsilon_j \otimes
\Bigl(\sum_i E_{i1}\otimes x_{ij}\Bigr)}_{E}.
\end{align*}
Applying Theorem \ref{1Main} (2) again, together with Lemma
\ref{6lem}, we see that
\begin{align*}
\Bignorm{\sum_j \varepsilon_j \otimes \Bigl(\sum_i E_{1i}\otimes
x_{ij}\Bigr)}_{E}\, & \lesssim\, \Bignorm{\sum_{ij} E_{1j} \otimes
E_{1i} \otimes x_{ij} }_E\, +\, \Bignorm{\sum_{ij} E_{j1} \otimes
E_{1i} \otimes x_{ij} }_E\\ &  = \, \bignorm{(x_{ij})_{i,j}}_r \,
+\, \bignorm{[x_{ji}]}_E.
\end{align*}
Likewise,
$$
\Bignorm{\sum_j \varepsilon_j \otimes \Bigl(\sum_i E_{i1}\otimes
x_{ij}\Bigr)}_{E} \,\lesssim\,  \bignorm{(x_{ij})_{i,j}}_c \, +\,
\bignorm{[x_{ij}]}_E.
$$
Combined with the previous inequality, these yield the desired
upper estimate.

The lower estimate can be deduced from the upper one by duality,
the argument being similar to the one in the proof of Theorem
\ref{1Main} given in Section 3. We skip the details.
\end{proof}

\begin{theorem}\label{6Double1}  Assume that $E$ is separable, or
that $E$ is the dual of a separable symmetric function space. If
$E\in {\rm Int}(L^2,L^q)$ for some $q<\infty$, then we have
$$
\Bignorm{\sum_{i,j} \varepsilon_i\otimes \varepsilon_j\otimes
x_{ij}}_{E} \,\approx\, \bignorm{(x_{ij})_{i,j}}_{\max}
$$
for finite families $(x_{ij})_{ij}$ of $E(M)$.

In particular, this holds true if $q_E<\infty$ and if either $E$
is 2-convex or $p_E>2$.
\end{theorem}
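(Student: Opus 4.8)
The plan is to obtain the upper estimate for free and to prove the lower estimate by the same interpolation scheme that underlies Corollary~\ref{5Equiv1}(2), now feeding in the \emph{double} Khintchine inequality (\ref{6DbK1}) in place of the single one. First, the hypothesis $E\in{\rm Int}(L^2,L^q)$ forces $2\leq p_E\leq q_E\leq q$ by (\ref{2Encad}), so in particular $p_E>1$ and $q_E<\infty$; hence Proposition~\ref{6Upper} applies and already yields
\[
\Bignorm{\sum_{i,j}\varepsilon_i\otimes\varepsilon_j\otimes x_{ij}}_E\,\lesssim\,\bignorm{(x_{ij})_{i,j}}_{\max}.
\]
It then remains to prove the reverse estimate $\bignorm{(x_{ij})_{i,j}}_{\max}\lesssim\bignorm{\sum_{i,j}\varepsilon_i\otimes\varepsilon_j\otimes x_{ij}}_E$, which amounts to bounding each of the four quantities $\bignorm{[x_{ij}]}_E$, $\bignorm{[x_{ji}]}_E$, $\bignorm{(x_{ij})_{i,j}}_c$, $\bignorm{(x_{ij})_{i,j}}_r$ by the double Rademacher average.

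For this I would extract, from an arbitrary $f$ in $L^1(\Omega\times\Omega;L^1(M))+L^\infty(\Omega)\ten L^\infty(\Omega)\ten M$, the Rademacher coefficients $a_{ij}(f)=\int_{\Omega\times\Omega}\varepsilon_i\varepsilon_j\,f$ in $E(M)$, and record two endpoint estimates. At the level of $L^2$, orthonormality of $(\varepsilon_i\otimes\varepsilon_j)_{i,j}$ in $L^2(\Omega\times\Omega)$ and the Hilbert--Schmidt identity give $\bignorm{(a_{ij}(f))_{i,j}}_{\max}=\bigl(\sum_{i,j}\norm{a_{ij}(f)}_2^2\bigr)^{1/2}\leq\norm{f}_{L^2(\Omega\times\Omega;L^2(M))}$, where all four $\max$-terms coincide. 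At the level of $L^q$ (enlarging $q$ if necessary, we may assume $q>2$), since $1<q<\infty$ the space $L^q(M)$ has non trivial type, so iterating the argument of Remark~\ref{2Kconvex} the double projection $P\otimes P\otimes I$ is bounded on $L^q(\Omega\times\Omega;L^q(M))$; applying it and then the inequality $\bignorm{(a_{ij}(f))_{i,j}}_{\max}\lesssim\bignorm{\sum_{i,j}\varepsilon_i\otimes\varepsilon_j\otimes a_{ij}(f)}_{L^q}$ contained in (\ref{6DbK1}) yields $\bignorm{(a_{ij}(f))_{i,j}}_{\max}\lesssim\norm{f}_{L^q(\Omega\times\Omega;L^q(M))}$.

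Next I would realize each of the four $\max$-terms as the norm of an honest \emph{linear} configuration of the $a_{ij}(f)$: the maps $f\mapsto\sum_{i,j}E_{ij}\otimes a_{ij}(f)$ and $f\mapsto\sum_{i,j}E_{ji}\otimes a_{ij}(f)$ into $E(M_n(M))$, and the single-column and single-row maps $f\mapsto\sum_{i,j}E_{(i,j),1}\otimes a_{ij}(f)$, $f\mapsto\sum_{i,j}E_{1,(i,j)}\otimes a_{ij}(f)$ into $E(M_{n^2}(M))$, whose $E$-norms are precisely the matrix and column/row quantities built from $(a_{ij}(f))_{i,j}$, by (\ref{2Square}). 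By the two endpoint estimates above each of these maps is bounded from $L^2$ and from $L^q$ into the corresponding matrix space, so Proposition~\ref{2Interpolation1} makes it bounded at the $E$-level. Evaluating at $f=\sum_{i,j}\varepsilon_i\otimes\varepsilon_j\otimes x_{ij}$, for which $a_{ij}(f)=x_{ij}$, gives the four required bounds and hence the lower estimate. Finally, the ``in particular'' assertion follows exactly as in Corollary~\ref{5Equiv3}: if $q_E<\infty$ and $E$ is $2$-convex (or $p_E>2$), then \cite[Thm.~7.3]{KM} provides some $q<\infty$ with $E\in{\rm Int}(L^2,L^q)$, and the main statement applies.

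The only genuinely non-formal step is the $L^q$ endpoint estimate: it is essential first to replace $f$ by its double Rademacher projection (which is where the $K$-convexity of $L^q(M)$ enters) before one is entitled to invoke (\ref{6DbK1}), and to keep the four configuration maps genuinely linear one must insert the square roots only at the end through (\ref{2Square}), since $f\mapsto(\sum_{i,j}a_{ij}(f)^*a_{ij}(f))^{1/2}$ is not linear.
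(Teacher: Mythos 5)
Your proposal is correct and follows essentially the same route as the paper: the upper estimate via Proposition \ref{6Upper} (noting $p_E\geq 2$ and $q_E\leq q$ from (\ref{2Encad})), and the lower estimate by interpolating the linear coefficient-extraction maps between the trivial $L^2$ endpoint and an $L^q$ endpoint obtained from the double Rademacher projection together with (\ref{6DbK1}), exactly the argument the paper compresses into ``as in the proof of Corollary \ref{5Equiv1}, by interpolation, using (\ref{6DbK1}) on $L^q$.'' Your explicit attention to applying $P\otimes P\otimes I$ before invoking (\ref{6DbK1}) and to keeping the four configuration maps linear via (\ref{2Square}) supplies precisely the details the paper leaves to the reader.
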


\begin{proof}
Assume that  $E\in {\rm Int}(L^2,L^q)$ for some $q<\infty$. The
estimate $\lesssim$ is given by Proposition \ref{6Upper}. As in
the proof of Corollary \ref{5Equiv1}, the reverse estimate is
proved by interpolation, using (\ref{6DbK1}) on $L^q$.

The last line of the statement then follows from \cite[Thm.
7.3]{KM}.
\end{proof}

\bigskip
We shall now consider double sums in the 2-concave case or, more
generally, in the case when $E\in {\rm Int}(L^1,L^2)$. This case
turns out to be much more delicate than the 2-convex one. The
major obstacle is that we do not know whether the lower estimate
in Proposition \ref{6Upper} remains true in the case when $1\leq
p_E\leq q_E<\infty$.

We will need to somehow replace the Rademacher functions by the
generators of a free group living in the associated group von
Neumann algebra. The use of such techniques goes back to Haagerup
and Pisier \cite{HP}. In the sequel, we let $G=\Fdb_\infty$ be a
free group with an infinite sequence $\gamma_1,\ldots, \gamma_n,
\ldots$ of generators. Let $(\delta_g)_{g\in G}$ denote the
canonical basis of $\ell^2_G$ and let $\lambda\colon G\to
B(\ell^2_G)$ be the left regular representation of $G$, defined by
$$ \lambda(g)\delta_h = \delta_{gh},\qquad g,h\in G.
$$
We recall that the group von Neumann algebra of $G$ is defined as
$$
VN(G)=\bigl\{\lambda(g)\, :\, g\in G\bigr\}'' = \overline{{\rm
Span}}^{w^*}\bigl\{\lambda(g)\, :\, g\in G\bigr\} \subset
B(\ell^2_G).
$$
For simplicity we let $\M=VN(G)$ in the sequel. Let $e$ be the
unit element of $G$. Then $\M$ has a canonical normalized  normal
trace $\sigma$ defined by $\sigma(z)=\langle
z(\delta_e),\delta_e\rangle$.

Let $(M,\tau)$ be an arbitrary semifinite von Neumann algebra. In
the sequel we regard the von Neumann tensor product
$\M\overline{\otimes}M$ as equipped with $\sigma\otimes\tau$ in
the usual way. It is remarkable that the noncommutative Khintchine
inequalities on $L^p$ remain unchanged if one replaces the
Rademacher sequence by the $\lambda(\gamma_k)$'s. Namely for any
$1\leq p<\infty$, there is an equivalence
\begin{equation}\label{4Equiv}
\Bignorm{\sum_{k} \lambda(\gamma_k) \otimes
x_k}_{L^p(\footnotesize{\M}\ten M)} \,\approx\, \Bignorm{\sum_{k}
\varepsilon_k \otimes x_k}_{L^p(\footnotesize{\Tdb};  L^p(M))}
\end{equation}
for finite families $(x_k)_k$ of $L^p(M)$ (see \cite[Sect. 3]{HP}
and \cite[Thm. 9.8.7]{P2}).

Let $x_1,\ldots, x_n\in E(M)$. It is clear that each
$\lambda(\gamma_k)\otimes x_k$ belongs to
$E(\M\overline{\otimes}M)$ and we write
$$
\Bignorm{\sum_{k}\lambda(\gamma_k) \otimes x_k}_{E}
$$
for the norm of their sum $\sum_k  \lambda(\gamma_k) \otimes
x_k\,$ in the latter space.

Let $a\in \M$ and let $\varphi_a\colon L^1(\M)\to\Cdb$ be the
functional defined by $\varphi_a (z)=\sigma(za)$ for any $z\in
L^1(\M)$. The algebraic tensor product $L^1(\M)\otimes L^1(M)$ is
dense in $L^1(\M\overline{\otimes}M)$ (see e.g. \cite[Sect.
3]{ER}) and $\varphi_a\otimes I_{L^1(M)}$ uniquely extends to a
bounded operator $T_a^1\colon L^1(\M\overline{\otimes}M)\to
L^1(M)$. Indeed, this extension is the pre-adjoint of the
embedding $M\to \M\overline{\otimes}M$ taking any $x\in M$ to
$a\otimes x$. Likewise, $\varphi_{a \vert \footnotesize{\M}}
\otimes I_{M}$ uniquely extends to a contractive normal operator
$T_a^\infty \colon \M\overline{\otimes}M\to M$. Moreover these two
maps coincide on the intersection of $L^1(\M\overline{\otimes}M)$
and $\M\overline{\otimes}M$. Hence we may define a bounded linear
map
$$
T_a\colon L^1(\M\overline{\otimes}M) +\M\overline{\otimes}M
\longrightarrow L^1(M) + M $$ extending both of them. In the
sequel it will be convenient to write
$$
\langle u,a\rangle =T_a(u),\qquad u\in L^1(\M\overline{\otimes}M)
+\M\overline{\otimes}M.
$$

Fix an integer $n\geq 1$ and let
$$
P_n\colon L^1(\M\overline{\otimes}M) +\M\overline{\otimes}M
\longrightarrow L^1(\M\overline{\otimes}M) +\M\overline{\otimes}M
$$
be defined by
\begin{equation}\label{4P0}
P_n(u)=\,\sum_{k=1}^n \lambda(\gamma_k) \otimes \langle
u,\lambda(\gamma_k^{-1})\rangle\,.
\end{equation}
This is a projection which extends the orthogonal projection
$L^2(\M\overline{\otimes}M)\to L^2(\M\overline{\otimes}M)$ onto
the subspace ${\rm Span}\{\lambda(\gamma_k) :\, 1\leq k\leq
n\}\otimes L^2(M)$.

\begin{lemma}\label{4P3}
There exist a constant $K_E \geq 0$ such that for any $n\geq 1$
and any $(M,\tau)$ as above,
$$
\bignorm{P_n\colon E(\M\ten M)\longrightarrow E(\M\ten M)}\leq
K_E.
$$
\end{lemma}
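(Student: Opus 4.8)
The plan is to establish the estimate at the two endpoints $L^1(\M\ten M)$ and $\M\ten M=L^\infty(\M\ten M)$ and then to interpolate. Since $E$ is fully symmetric we have $E\in{\rm Int}(L^1,L^\infty)$, so by Proposition \ref{2Interpolation1} (applied with $p=1$, $q=\infty$, and with $\M\ten M$ in the role of both $M$ and $N$) it suffices to produce a constant $K$, independent of $n$ and of $(M,\tau)$, such that $P_n$ is bounded by $K$ both on $L^1(\M\ten M)$ and on $\M\ten M$. The delicate point, which dictates the whole strategy, is uniformity in $M$: I would obtain it by proving a \emph{completely} bounded estimate at the level of $\M=VN(\Fdb_\infty)$ alone and then amplifying. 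This is exactly why one passes from the Rademacher sequence to the free generators: the Rademacher projection is bounded on $L^r$ only for $1<r<\infty$ (which is why Remark \ref{2Kconvex} had to assume $p_E>1$), whereas the projection onto the free generators is bounded all the way down to $L^1$ and up to $L^\infty$, so that no restriction on the Boyd indices of $E$ is needed.

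For the $L^\infty$ endpoint, let $Q_n\colon\M\to\M$ be the projection $z\mapsto\sum_{k=1}^n\sigma(z\lambda(\gamma_k^{-1}))\,\lambda(\gamma_k)$ onto ${\rm Span}\{\lambda(\gamma_k):1\le k\le n\}$, so that $P_n=Q_n\otimes I_M$ on $\M\ten M$. The family $\{\gamma_k\}_{k\ge 1}$ is a Leinert set, and I would invoke the Haagerup--Pisier theory (see \cite[Sect. 3]{HP} and \cite{P2}) to the effect that $Q_n$ is completely bounded on $\M$, with $\cbnorm{Q_n}$ dominated by an absolute constant independent of $n$: the range is completely isomorphic to $R\cap C$, and the truncation to the first $n$ coordinates is completely contractive there. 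By the very definition of the completely bounded norm, amplifying by $I_M$ then yields $\bignorm{P_n\colon\M\ten M\to\M\ten M}\le\cbnorm{Q_n}\le K$, uniformly in $M$.

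For the $L^1$ endpoint I would argue by duality. With respect to the bilinear trace pairing, the adjoint of $Q_n$ is the projection onto ${\rm Span}\{\lambda(\gamma_k^{-1}):1\le k\le n\}$, since $\sigma(\lambda(\gamma_i)\lambda(\gamma_j^{-1}))=\delta_{ij}$. As $\{\gamma_k^{-1}\}_{k\ge 1}$ is again a Leinert set (it freely generates $\Fdb_\infty$), the same Haagerup--Pisier estimate shows this adjoint is completely bounded on $\M$ with a uniform constant; equivalently one transports the bound along the complete isometry $z\mapsto z^*$, which interchanges $\lambda(\gamma_k)$ and $\lambda(\gamma_k^{-1})$. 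Its pre-adjoint is then a bounded operator on $L^1(\M\ten M)$ that coincides with $P_n$ on the weak-$*$ dense algebraic tensor product, hence equals the extension of $P_n$; this gives $\bignorm{P_n\colon L^1(\M\ten M)\to L^1(\M\ten M)}\le K$, again uniformly in $n$ and $M$. Combining the two endpoint bounds through Proposition \ref{2Interpolation1} produces $K_E=CK$, depending only on $E$, with $\bignorm{P_n\colon E(\M\ten M)\to E(\M\ten M)}\le K_E$ for all $n$ and all $(M,\tau)$.

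I expect the single genuine difficulty to be the complete boundedness of $Q_n$ on the von Neumann algebra $\M$. Mere boundedness would not survive amplification by an arbitrary $M$, so the free-group (Leinert) input must be used in its completely bounded form; one must also verify that the relevant constant does not deteriorate as $n\to\infty$, which here follows from the complete contractivity of coordinate truncations on $R\cap C$ (equivalently, from the uniform Leinert constant of the generators).
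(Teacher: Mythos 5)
Your proposal is correct and follows essentially the same route as the paper: a uniform $L^\infty$-bound for $P_n$ coming from the Haagerup--Pisier estimate for the free generators (the paper quotes \cite[Prop. 1.1]{HP} directly in its operator-valued form on $\P\otimes M$ and extends by Kaplansky density and $w^*$-continuity, which is exactly your cb-norm-plus-amplification argument), an $L^1$-bound obtained by duality through the involution $v\mapsto v^*$, and interpolation via Proposition \ref{2Interpolation1} using that a fully symmetric $E$ lies in ${\rm Int}(L^1,L^\infty)$. The only trivial slip is that the algebraic tensor product is norm-dense (not weak-$*$ dense) in $L^1(\M\ten M)$, which does not affect the argument.
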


\begin{proof}
By construction, $P_n$ is the extension of two bounded maps
$$
P_n^\infty \colon \M\overline{\otimes}M\longrightarrow
\M\overline{\otimes}M\qquad\hbox{and} \qquad P_n^1\colon
L^1(\M\overline{\otimes}M) \longrightarrow
L^1(\M\overline{\otimes}M).
$$
Since the $L^2$-realization of $P_n$ is selfadjoint, $P_n^\infty$
is the adjoint of the mapping $v\mapsto [P_n^1(v^*)]^*$ on
$L^1(\M\overline{\otimes}M)$. Hence
\begin{equation}\label{4Equal}
\norm{P_n^\infty}=\norm{P_n^1}.
\end{equation}
Let $\P= {\rm Span}\{\lambda(g) :\, g\in G\}$. According to
\cite[Prop. 1.1]{HP}, the restriction of $P_n^\infty$ to $\P
\otimes M$ has norm $\leq 2$. Further, $\P\otimes M$ is a
$w^*$-dense $*$-subalgebra of  $\M\overline{\otimes}M$. Hence the
unit ball of $\P \otimes M$ is $w^*$-dense in the unit ball of
$\M\overline{\otimes}M$ by Kaplansky's Theorem. Since $P_n^\infty$
is $w^*$-continuous, we deduce that $\norm{P_n^\infty}\leq 2$. The
result now follows from (\ref{4Equal}) and Proposition
\ref{2Interpolation1}.
\end{proof}

\begin{lemma}\label{4lem}
Assume that $q_E<\infty$. Then we have an estimate
$$
\Bignorm{\sum_{k}\varepsilon_k \otimes x_k}_{E} \, \lesssim\,
\Bignorm{\sum_{k}  \lambda(\gamma_k) \otimes x_k}_{E}
$$
for finite families $(x_k)_k$ of $E(M)$.
\end{lemma}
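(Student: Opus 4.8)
The plan is to deduce the lemma from the boundedness of a single linear map on $E(\M\ten M)$, obtained by interpolation between two $L^r$-endpoints. I would introduce the ``coefficient-to-Rademacher'' map
$$
V(u)\,=\,\sum_{k=1}^n \varepsilon_k\otimes\langle u,\lambda(\gamma_k^{-1})\rangle,
$$
defined on $L^1(\M\ten M)+\M\ten M$ and valued in $L^1(\Omega;L^1(M))+L^\infty(\Omega)\ten M$. Exactly as for $T_a$ and $P_n$, the two pieces $T^1_a$ and $T^\infty_a$ agree on the intersection of their domains, so $V$ is a well-defined single map on the sum space. Its relevance is that for $u=\sum_j\lambda(\gamma_j)\otimes x_j$ one computes $\langle u,\lambda(\gamma_k^{-1})\rangle=x_k$, using $\sigma(\lambda(g))=\delta_{g,e}$ together with the freeness of the generators, so that $V\bigl(\sum_j\lambda(\gamma_j)\otimes x_j\bigr)=\sum_k\varepsilon_k\otimes x_k$. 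Thus the lemma follows once I show that $V$ is bounded from $E(\M\ten M)$ into $E(L^\infty(\Omega)\ten M)$ with a constant independent of $n$.

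First I would establish uniform $L^r$-bounds for $V$. The point is that $V(u)$ is the Rademacher average of the coefficients $x_k^u=\langle u,\lambda(\gamma_k^{-1})\rangle$, whereas $P_n(u)=\sum_k\lambda(\gamma_k)\otimes x_k^u$ (cf. (\ref{4P0})) is the corresponding free average. Hence, for any $1\le r<\infty$, the Haagerup--Pisier equivalence (\ref{4Equiv}) gives
$$
\bignorm{V(u)}_{L^r(\Omega;L^r(M))}\,=\,\Bignorm{\sum_k\varepsilon_k\otimes x_k^u}_{L^r}\,\lesssim\,\Bignorm{\sum_k\lambda(\gamma_k)\otimes x_k^u}_{L^r(\M\ten M)}\,=\,\bignorm{P_n(u)}_{L^r(\M\ten M)}.
$$
Since the proof of Lemma \ref{4P3} shows $\norm{P_n^\infty}=\norm{P_n^1}\le 2$, interpolation between $L^1$ and $\M\ten M$ yields $\norm{P_n\colon L^r\to L^r}\le 2$ for every $r$, with constant independent of $n$. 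Combined with the above, this bounds $V\colon L^r(\M\ten M)\to L^r(\Omega;L^r(M))$ uniformly in $n$, by a constant depending only on $r$.

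Next I would invoke the hypothesis $q_E<\infty$. As $E$ is fully symmetric it lies in ${\rm Int}(L^1,L^\infty)$, and since $q_E<\infty$, Boyd's theorem \cite[Thm. 7.3]{KM} lets me lower the upper exponent to a finite one, i.e. $E\in{\rm Int}(L^1,L^q)$ for some $q<\infty$. Applying the previous step with $r=1$ and $r=q$, the map $V\colon L^1(\M\ten M)+L^q(\M\ten M)\to L^1(L^\infty(\Omega)\ten M)+L^q(L^\infty(\Omega)\ten M)$ is bounded on both endpoints uniformly in $n$, and Proposition \ref{2Interpolation1} gives a bound $\norm{V\colon E(\M\ten M)\to E(L^\infty(\Omega)\ten M)}\le K_E$ with $K_E$ independent of $n$. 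Evaluating at $u=\sum_k\lambda(\gamma_k)\otimes x_k$ and recalling $V(u)=\sum_k\varepsilon_k\otimes x_k$ then yields the claimed estimate.

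The routine ingredients are the identities $\langle u,\lambda(\gamma_k^{-1})\rangle=x_k$ and $V(u)=\sum_k\varepsilon_k\otimes x_k$, together with the interpolation step itself. The crux of the argument is the uniform-in-$n$ control: it is essential that both the equivalence (\ref{4Equiv}) and the projection bound $\norm{P_n}_{L^r}\le 2$ hold with constants independent of $n$, and that the upper endpoint exponent $q$ can be taken \emph{finite} --- this is exactly what $q_E<\infty$ provides, since (\ref{4Equiv}) genuinely fails at $r=\infty$. Organizing $V$ as one map on the sum space, rather than treating each $L^r$ separately, is what makes Proposition \ref{2Interpolation1} directly applicable.
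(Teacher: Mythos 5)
Your proposal is correct and follows essentially the same route as the paper: the map you call $V$ is exactly the operator $Q_n$ the authors define, and they likewise bound it on $L^p$ for each finite $p$ by combining the uniform bound on $P_n$ (Lemma \ref{4P3} with $E=L^p$) with the equivalence (\ref{4Equiv}), then interpolate using $E\in{\rm Int}(L^1,L^q)$ for some finite $q$. The only cosmetic difference is that you spell out the identity $\langle u,\lambda(\gamma_k^{-1})\rangle=x_k$, which the paper leaves implicit.
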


\begin{proof}
For any $n\geq 1$, let us define
$$
Q_n\colon L^1(\M\overline{\otimes}M) +\M\overline{\otimes}M
\longrightarrow L^1(\Tdb; L^1(M)) +L^{\infty}(\Tdb)
\overline{\otimes}M
$$
in a similar way to $P_n$, by letting
$$
Q_n(u)=\,\sum_{k=1}^n \varepsilon_k \otimes \langle
u,\lambda(\gamma_k^{-1})\rangle\,.
$$
Applying Lemma \ref{4P3} with $E=L^p$ and (\ref{4Equiv}), we
obtain that for any $1\leq p<\infty$, we have
$$
\bignorm{Q_n\colon L^p(\M\overline{\otimes}M) \longrightarrow
L^p(\Tdb; L^p(M))}\leq K_p
$$
for some constant $K_p$ only depending on $p$. Since $q_E<\infty$,
there exists some $1<q<\infty$ such that $E\in{\rm Int}(L^1,L^q)$.
Applying the above estimate with $p=1$ and $p=q$ together with
Proposition \ref{2Interpolation1}, we deduce that
$$
\bignorm{Q_n\colon E(\M\overline{\otimes}M) \longrightarrow
E(L^{\infty}(\Tdb)\ten M)}\leq D_E
$$
for some constant $D_E$ only depending on $E$. Since $$
Q_n\Bigl(\sum_{k=1}^n \lambda(\gamma_k)\otimes x_k\Bigr)\,=\,
\sum_{k=1}^n \varepsilon_k\otimes x_k
$$
for any $x_1,\ldots, x_n$ in $E(M)$, we obtain the desired
estimate.
\end{proof}

Using $K$-convexity as in Remark \ref{2Kconvex}, it is not hard to
see that if $p_E>1$ and $q_E<\infty$, then the two averages
$\bignorm{\sum_{k} \lambda(\gamma_k) \otimes x_k}_{E}$ and
$\bignorm{\sum_{k} \varepsilon_k  \otimes x_k}_{E}$ are actually
equivalent. We do not know if this equivalence holds true if we
merely assume that  $q_E<\infty$. However we have the following
special case.

\begin{lemma}\label{5Equiv2}
Assume that $E\in {\rm Int}(L^1,L^2)$. Then we have an equivalence
$$
\Bignorm{\sum_{k} \lambda(\gamma_k) \otimes x_k}_{E} \,\approx\,
\Bignorm{\sum_{k} \varepsilon_k  \otimes
x_k}_{E}\,\Bigl(\,\approx\, \norm{(x_k)_k}_{\inf}\,\Bigr)
$$
for finite families $(x_k)_k$ of $E(M)$.
\end{lemma}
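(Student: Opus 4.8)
The plan is to obtain the asserted equivalence by combining three facts, of which only one requires a genuinely new argument. First, the hypothesis $E\in{\rm Int}(L^1,L^2)$ forces $q_E\leq 2$ by (\ref{2Encad}), so Corollary \ref{5Equiv1}(1) already gives $\bignorm{\sum_k\varepsilon_k\otimes x_k}_E\approx\norm{(x_k)_k}_{\inf}$, which is the parenthetical equivalence in the statement. Second, since $q_E<\infty$, Lemma \ref{4lem} yields $\bignorm{\sum_k\varepsilon_k\otimes x_k}_E\lesssim\bignorm{\sum_k\lambda(\gamma_k)\otimes x_k}_E$. Hence the whole lemma will follow once I prove the single estimate $\bignorm{\sum_k\lambda(\gamma_k)\otimes x_k}_E\lesssim\norm{(x_k)_k}_{\inf}$, since this closes the chain $\norm{(x_k)_k}_{\inf}\approx\bignorm{\sum_k\varepsilon_k\otimes x_k}_E\lesssim\bignorm{\sum_k\lambda(\gamma_k)\otimes x_k}_E\lesssim\norm{(x_k)_k}_{\inf}$.

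To prove that estimate I would interpolate between $L^1$ and $L^2$. Set $N=\M\ten M$ and consider, for fixed $n$, the map $T$ taking $(x_1,\ldots,x_n)$ to $\sum_{k}\lambda(\gamma_k)\otimes x_k$, viewed as a linear map $L^1(M)^n+L^2(M)^n\to L^1(N)+L^2(N)$. On $L^2$ the generators $\lambda(\gamma_k)$ are orthonormal in $L^2(\M)$, since $\sigma(\lambda(\gamma_j)^*\lambda(\gamma_k))=\sigma(\lambda(\gamma_j^{-1}\gamma_k))=\delta_{jk}$; this gives the identity $\bignorm{T(x_1,\ldots,x_n)}_{L^2(N)}=\norm{(x_k)_k}_c$. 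On $L^1$, combining the free/Rademacher comparison (\ref{4Equiv}) with the easy part of the noncommutative Khintchine inequality on $L^1(M)$ (equivalently, the $2$-concavity of $L^1$) gives $\bignorm{T(x_1,\ldots,x_n)}_{L^1(N)}\lesssim\bignorm{\sum_k\varepsilon_k\otimes x_k}_{L^1(\Tdb;L^1(M))}\leq\norm{(x_k)_k}_c$. After rescaling $T$ by the constant coming from (\ref{4Equiv}) so as to make both endpoint bounds have constant one, Lemma \ref{2Interpolation2} (with $p=1$, $q=2$) applies and yields $\bignorm{\sum_k\lambda(\gamma_k)\otimes x_k}_E\lesssim\norm{(x_k)_k}_c$.

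The matching row bound $\bignorm{\sum_k\lambda(\gamma_k)\otimes x_k}_E\lesssim\norm{(x_k)_k}_r$ I would get by taking adjoints: the map $z\mapsto z^*$ is isometric on $E(N)$ (as $\mu(z^*)=\mu(z)$), so $\bignorm{\sum_k\lambda(\gamma_k)\otimes x_k}_E=\bignorm{\sum_k\lambda(\gamma_k^{-1})\otimes x_k^*}_E$, and since $(\gamma_k^{-1})_k$ is again a sequence of free generators the column estimate just proved applies to it and delivers the bound $\norm{(x_k^*)_k}_c=\norm{(x_k)_k}_r$. Finally, for any decomposition $x_k=y_k+z_k$ the triangle inequality and the two bounds give $\bignorm{\sum_k\lambda(\gamma_k)\otimes x_k}_E\lesssim\norm{(y_k)_k}_c+\norm{(z_k)_k}_r$; taking the infimum over all such decompositions produces $\bignorm{\sum_k\lambda(\gamma_k)\otimes x_k}_E\lesssim\norm{(x_k)_k}_{\inf}$, completing the argument.

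Since the argument is just a two-endpoint interpolation, I do not expect a serious obstacle; the one place demanding care is the $L^1$ endpoint, where one must pass from the free generators to the Rademacher functions through (\ref{4Equiv}) and keep track of the resulting constant, as Lemma \ref{2Interpolation2} is stated with unit constants in its hypotheses. The only other small point is the observation that $(\gamma_k^{-1})_k$ are again free, which is what lets the single column estimate furnish the row estimate as well.
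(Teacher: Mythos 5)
Your proposal is correct and follows essentially the same route as the paper: the authors also obtain $\bignorm{\sum_k\lambda(\gamma_k)\otimes x_k}_E\lesssim\norm{(x_k)_k}_{\inf}$ by "arguing as in the proof of Corollary \ref{5Equiv1}(1)" (i.e.\ interpolating the column and row square-function bounds between the $L^1$ and $L^2$ endpoints, using (\ref{4Equiv}) at the $L^1$ endpoint), and then close the same chain via Lemma \ref{4lem} and the lower estimate of Theorem \ref{1Main}. You have merely written out the details the paper leaves implicit, including the careful handling of constants in Lemma \ref{2Interpolation2} and the adjoint trick for the row estimate.
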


\begin{proof}
Indeed arguing as in the proof of Corollary \ref{5Equiv1} (1) and
using equivalence (\ref{4Equiv}), we see that
$$
\Bignorm{\sum_{k} \lambda(\gamma_k) \otimes x_k}_{E}
\,\lesssim\,\bignorm{(x_k)_k}_{\inf}.
$$
Combining with Lemma \ref{4lem} and the lower estimate in Theorem
\ref{1Main}, one gets the equivalence.
\end{proof}

\begin{theorem}\label{6Double2}
Assume that $E$ is separable, or that $E$ is the dual of a
separable symmetric function space. If $E\in {\rm Int}(L^1,L^2)$,
then we have
$$
\Bignorm{\sum_{i,j} \varepsilon_i\otimes \varepsilon_j\otimes
x_{ij}}_{E} \,\approx\, \bignorm{(x_{ij})_{i,j}}_{\inf}
$$
for finite families $(x_{ij})_{ij}$ of $E(M)$.

In particular, this holds true if  either $E$ is 2-concave or
$q_E<2$.
\end{theorem}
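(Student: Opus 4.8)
The plan is to treat the two inequalities separately: the upper estimate by interpolation, in the spirit of Corollary \ref{5Equiv1} and Theorem \ref{6Double1}, and the lower estimate --- the genuinely hard direction --- by passing to free generators and reiterating the single-variable results. For the upper estimate $\Bignorm{\sum_{i,j}\varepsilon_i\otimes\varepsilon_j\otimes x_{ij}}_E\lesssim\bignorm{(x_{ij})_{i,j}}_{\inf}$, I would fix a decomposition $x_{ij}=a_{ij}+b_{ij}+c_{ij}+d_{ij}$ and bound the double average of each family in turn. For the $a$-family, consider the map $\sum_{i,j}E_{ij}\otimes a_{ij}\mapsto\sum_{i,j}\varepsilon_i\otimes\varepsilon_j\otimes a_{ij}$: it is bounded from $L^1(M_n(M))$ and from $L^2(M_n(M))$ into the corresponding average spaces (for $L^1$ by (\ref{6DbK2}) together with $\bignorm{(a_{ij})_{i,j}}_{\inf}\leq\bignorm{[a_{ij}]}_E$, for $L^2$ by the Hilbert--Schmidt identity), so Proposition \ref{2Interpolation1} gives boundedness on $E$. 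The $b$-family is handled by transposition and the $c$- and $d$-families by the column/row versions through Lemma \ref{2Interpolation2}; summing and taking the infimum over decompositions yields the upper estimate. Only $E\in{\rm Int}(L^1,L^2)$ is used here.

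For the lower estimate the duality argument of Proposition \ref{6Upper} is unavailable, since $E$ need not be $K$-convex. Instead I would first replace the Rademacher variables by the free generators $\lambda(\gamma_k)$. Iterating Lemma \ref{5Equiv2} once in each variable --- first over the algebra $L^{\infty}(\Omega)\ten M$, then over $\M\ten M$ --- transfers the double average to a free one,
$$
\Bignorm{\sum_{i,j}\varepsilon_i\otimes\varepsilon_j\otimes x_{ij}}_E\,\approx\,\Bignorm{\sum_{i,j}\lambda(\gamma_i)\otimes\lambda(\gamma_j)\otimes x_{ij}}_E\,=:\,\norm{W}_E .
$$
It then suffices to show $\bignorm{(x_{ij})_{i,j}}_{\inf}\lesssim\norm{W}_E$.

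Here the free-group machinery pays off. Writing $W=\sum_i\lambda(\gamma_i)\otimes U_i$ with $U_i=\sum_j\lambda(\gamma_j)\otimes x_{ij}\in E(\M\ten M)$ and applying Lemma \ref{5Equiv2} over $\M\ten M$, I obtain a near-optimal splitting $U_i=R_i+S_i$ with $\bignorm{(R_i)_i}_c+\bignorm{(S_i)_i}_r\lesssim\norm{W}_E$. Each $U_i$ lies in the range of the projection $P_n$ of (\ref{4P0}), so $U_i=P_n(R_i)+P_n(S_i)$, and by Lemma \ref{4P3} (applied with $M_n\otimes M$ in place of $M$) this replacement costs only the constant $K_E$; hence I may assume $R_i=\sum_j\lambda(\gamma_j)\otimes r_{ij}$ and $S_i=\sum_j\lambda(\gamma_j)\otimes s_{ij}$ with $x_{ij}=r_{ij}+s_{ij}$. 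Now I reiterate in the second variable: by (\ref{2Square}), $\bignorm{(R_i)_i}_c$ is the norm of the single free sum $\sum_j\lambda(\gamma_j)\otimes\bigl(\sum_i E_{i1}\otimes r_{ij}\bigr)$ over $M_n(M)$, so a second application of Lemma \ref{5Equiv2}, followed by the contractive column/row compressions of (\ref{2Col}), splits $r_{ij}=a_{ij}+c_{ij}$, and Lemma \ref{6lem} identifies the two resulting norms as the mixed matrix norm $\bignorm{[a_{ij}]}_E$ and the double-column norm $\bignorm{(c_{ij})_{i,j}}_c$, both $\lesssim\norm{W}_E$. Treating $(S_i)_i$ symmetrically splits $s_{ij}=b_{ij}+d_{ij}$ with $\bignorm{[b_{ji}]}_E$ and $\bignorm{(d_{ij})_{i,j}}_r$ both $\lesssim\norm{W}_E$. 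Then $x_{ij}=a_{ij}+b_{ij}+c_{ij}+d_{ij}$ is a competitor for the infimum, whence $\bignorm{(x_{ij})_{i,j}}_{\inf}\lesssim\norm{W}_E$.

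The main obstacle is exactly this lower estimate: without $K$-convexity one cannot project onto the closed span of the Rademacher functions inside $E(L^{\infty}(\Omega)\ten M)$, which blocks a direct reiteration of Theorem \ref{1Main}. The device that unlocks the argument is the passage to free generators, for which the projection $P_n$ is bounded on $E(\M\ten M)$ unconditionally (Lemma \ref{4P3}); the remaining delicacy is purely bookkeeping, namely checking that two successive single-variable $\inf$-splittings recombine into the four-term double $\inf$-norm, which is the role of Lemma \ref{6lem} and the matrix identities (\ref{2Square}). Finally, the stated special cases follow from the general one, since if $E$ is $2$-concave or $q_E<2$ then $E\in{\rm Int}(L^1,L^2)$ by \cite[Thm. 7.3]{KM}.
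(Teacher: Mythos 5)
Your proposal is correct and follows essentially the same route as the paper: the upper estimate by interpolating between $L^1$ (via (\ref{6DbK2})) and $L^2$, and the lower estimate by transferring to free generators through Lemma \ref{5Equiv2}, splitting in the outer variable, restoring the lacunary form with the projection $P_n$ of Lemma \ref{4P3}, reiterating in the inner variable with the column/row compressions of (\ref{2Col}), and identifying the four resulting norms via Lemma \ref{6lem}. You also correctly isolate the same key point the paper does, namely that the free-group projection $P_n$ replaces the unavailable $K$-convexity argument.
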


\begin{proof} By \cite[Thm. 7.3]{KM}, the last assertion will follow from the main one.
Thus we assume that $E\in {\rm Int}(L^1,L^2)$. Then the estimate
$$ \Bignorm{\sum_{i,j=1}^n \varepsilon_i \otimes \varepsilon_j
\otimes x_{ij}}_{E} \,\lesssim\, \bignorm{(x_{ij})_{i,j}}_{\inf}
$$
follows from interpolation principles as in Corollary
\ref{5Equiv1} (1), using (\ref{6DbK2}) for $p=1$. We are now going
to concentrate on the converse inequality.

We first observe that
\begin{equation}\label{6Equiv}
\Bignorm{\sum_{i,j} \lambda(\gamma_i) \otimes \lambda(\gamma_j)
\otimes x_{ij}}_{E} \,\approx\, \Bignorm{\sum_{i,j} \varepsilon_i
\otimes \varepsilon_j \otimes x_{ij}}_{E}
\end{equation}
for finite doubly indexed families $(x_{ij})_{i,j}$ of $E(M)$,
where  $\norm{\cdots}_{E}$ in the left hand-side stands for the
norm of the double sum $\sum_{i,j} \lambda(\gamma_i) \otimes
\lambda(\gamma_j) \otimes x_{ij}$ in the space $E(\M\ten\M\ten
M)$. Indeed applying Lemma \ref{5Equiv2} first on
$L^\infty(\Omega)\ten M$ and then on $\M\ten M$, we have
\begin{align*}
\Bignorm{\sum_{i} \varepsilon_i \otimes \Bigl( \sum_j
\varepsilon_j \otimes x_{ij}\Bigr)}_{E} & \approx
\Bignorm{\sum_{i} \lambda(\gamma_i) \otimes \Bigl( \sum_j
\varepsilon_j \otimes x_{ij}\Bigr) }_{E}\\ & = \Bignorm{\sum_{j}
\varepsilon_j \otimes \Bigl(\sum_i \lambda(\gamma_i)  \otimes
x_{ij}\Bigr) }_{E}\\ & \approx \Bignorm{\sum_{j} \lambda(\gamma_j)
\otimes \Bigl(\sum_i \lambda(\gamma_i)  \otimes x_{ij}\Bigr)
}_{E},
\end{align*}
which yields (\ref{6Equiv}). It therefore suffices to show an
estimate
\begin{equation}\label{6Goal}
\Bignorm{\sum_{i,j} \lambda(\gamma_i) \otimes \lambda(\gamma_j)
\otimes x_{ij}}_{E}\,\gtrsim \bignorm{(x_{ij})_{i,j}}_{\inf}.
\end{equation}
Fix an integer $n\geq 1$ and let $(x_{ij})_{1\leq i,j\leq n}$ be a
family of $E(M)$. We write
$$
\sum_{i,j=1}^n \lambda(\gamma_i) \otimes \lambda(\gamma_j) \otimes
x_{ij} = \sum_{i=1}^n \lambda(\gamma_i) \otimes
z_i,\qquad\hbox{with}\qquad\ z_i=\sum_{j=1}^n  \lambda(\gamma_j)
\otimes x_{ij}.
$$
Then according to Lemma \ref{5Equiv2}, there exist $z'_1,\ldots,
z'_n,\, z''_1,\ldots, z''_n$ in $E(\M\ten M)$ such that
$z_i=z'_i+z''_i$ for any $i$ and $$ \Bignorm{\sum_{i=1}^n E_{i1}
\otimes z'_i}_{E(M_n\otimes\footnotesize{\M}\ten M)}\,\lesssim\,
\Bignorm{\sum_{i,j} \lambda(\gamma_i) \otimes \lambda(\gamma_j)
\otimes x_{ij}}_{E},
$$
$$
\Bignorm{\sum_{i=1}^n E_{1i} \otimes
z''_i}_{E(M_n\otimes\footnotesize{\M}\ten M)}\,\lesssim\,
\Bignorm{\sum_{i,j} \lambda(\gamma_i) \otimes \lambda(\gamma_j)
\otimes x_{ij}}_{E}.
$$
Let $P_n$ be defined by (\ref{4P0}). We clearly have
$P_n(z_i)=z_i$, hence $z_i = P_n(z'_i) + P_n(z''_i)$ for any
$i=1,\ldots,n$. Moreover there exist two families $(u_{ij})_{1\leq
i,j\leq n}$ and $(v_{ij})_{1\leq i,j\leq n}$ of $E(M)$ such that
$$
P_n(z'_i)=\sum_{j=1}^n \lambda(\gamma_j)\otimes
u_{ij}\qquad\hbox{and}\qquad P_n(z''_i)=\sum_{j=1}^n
\lambda(\gamma_j)\otimes v_{ij}
$$
for any $i$. Then we have decompositions $$
x_{ij}=u_{ij}+v_{ij},\qquad 1\leq i,j\leq n.
$$
Let us now apply Lemma \ref{4P3} on $M_n\otimes M$, with
$I_{M_n}\otimes P_n$ instead of $P_n$. We find that \begin{align*}
\Bignorm{\sum_{j=1}^n \lambda(\gamma_j)\otimes\Bigl(\sum_{i=1}^n
E_{i1} \otimes  u_{ij} \Bigr)}_{E}\, & = \Bignorm{\sum_{i=1}^n
E_{i1} \otimes P_n(z'_i)}_{E}\\ & \lesssim\, \Bignorm{\sum_{i=1}^n
E_{i1} \otimes  z'_i}_{E}\\ & \lesssim \, \Bignorm{\sum_{i,j}
\lambda(\gamma_i) \otimes \lambda(\gamma_j) \otimes x_{ij}}_{E}.
\end{align*}
In the same manner,
$$
\Bignorm{\sum_{j=1}^n\lambda(\gamma_j)\otimes\Bigl(\sum_{i=1}^n
E_{1i}\otimes v_{ij} \Bigr)}_{E}\, \lesssim \, \Bignorm{\sum_{i,j}
\lambda(\gamma_i) \otimes \lambda(\gamma_j) \otimes x_{ij}}_{E}.
$$
We can now repeat the above arguments, using the projection  ${\rm
Col}_n$ and (\ref{2Col}) (as well as its row counterpart) instead
of $P_n$. We therefore obtain new families $(a_{ij})_{i,j}$,
$(b_{ij})_{i,j}$, $(c_{ij})_{i,j}$, and $(d_{ij})_{i,j}$ in $E(M)$
such that
$$
u_{ij} = c_{ij}+ a_{ij},\qquad v_{ij} = b_{ij}+ d_{ij}, \qquad
1\leq i,j\leq n,
$$
and
$$
\Bignorm{\sum_{i,j} E_{j1}\otimes E_{i1}\otimes
c_{ij}}_E\,\lesssim\,
\Bignorm{\sum_j\lambda(\gamma_j)\otimes\Bigl(\sum_i E_{i1}\otimes
u_{ij} \Bigr)}_{E},
$$
$$
\Bignorm{\sum_{i,j} E_{1j}\otimes E_{i1}\otimes
a_{ij}}_E\,\lesssim\, \Bignorm{\sum_j
\lambda(\gamma_j)\otimes\Bigl(\sum_i E_{i1}\otimes u_{ij}
\Bigr)}_{E},
$$
$$
\Bignorm{\sum_{i,j} E_{j 1}\otimes E_{1i}\otimes
b_{ij}}_E\,\lesssim\,
\Bignorm{\sum_j\lambda(\gamma_j)\otimes\Bigl(\sum_i E_{1i}\otimes
v_{ij} \Bigr)}_{E},
$$
$$
\Bignorm{\sum_{i,j} E_{1j}\otimes E_{1i}\otimes
d_{ij}}_E\,\lesssim\, \Bignorm{\sum_j
\lambda(\gamma_j)\otimes\Bigl(\sum_i E_{1i}\otimes  v_{ij}
\Bigr)}_{E}.
$$
According to Lemma \ref{6lem}, these estimate imply that the four
quantities $$ \bignorm{[a_{ij}]}_E,\ \bignorm{[b_{ji}]}_E,\,
\Bignorm{\Bigl(\sum_{ij} c_{ij}^* c_{ij}\Bigr)^{\frac{1}{2}}}_E,\
\hbox{ and } \Bignorm{\Bigr(\sum_{ij} d_{ij}  d_{ij}^*
\Bigr)^{\frac{1}{2}}}_E
$$
are all $$ \lesssim \Bignorm{\sum_{i,j} \lambda(\gamma_i)
\otimes\lambda(\lambda_j) \otimes x_{ij}}_E.
$$
Furthermore we have $x_{ij}= u_{ij}+v_{ij} = a_{ij}+ b_{ij}+
c_{ij}+ d_{ij}$ for any $i,j$, hence we obtain that (\ref{6Goal})
holds true.
\end{proof}

\vskip 1cm


\begin{thebibliography}{99}
\bibitem{AB} S. Astashkin, and M. Braverman,
{\it A subspace of a symmetric space, which is generated by a
Rademacher system with vector coefficients} (in Russian), Operator
Equations in Function Spaces 130 (1986), 3-10.
\bibitem{AS} S. Astashkin, and F. Sukochev,
{\it Multiplicative systems of functions with vector coefficients}, in preparation.
\bibitem{BX} T. Bekjan, and Q. Xu,
{\it Riesz and Sz\"ego type factorizations for noncommutative Hardy spaces},
Preprint 2007.
\bibitem{B} D. Burkholder, {\it Martingales and singular integrals
in Banach spaces}, pp. 233--269 in ``Handbook of the Geometry of
Banach Spaces", Vol. I, edited by W.B. Johnson and
J.Lindenstrauss, Elsevier, 2001.
\bibitem{CS} V. Chilin, and F. Sukochev,
{\it Weak convergence in noncommutative symmetric spaces},
J. Operator Theory 31 (1994), 35-65.
\bibitem{DDD1} P. Dodds, T. Dodds, and B. de Pagter,
{\it Noncommutative Banach function spaces},  Math. Z. 201 (1989),
583-597.
\bibitem{DDD2} P. Dodds, T. Dodds, and B. de Pagter,
{\it Fully symmetric operator spaces}, Int. Eq. Operator Theory 15
(1992), 942-972.
\bibitem{DDD3} P. Dodds, T. Dodds, and B. de Pagter,
{\it Noncommutative K\"othe duality}, Trans. Amer. Math. Soc. 339 (1993), 717-750.
\bibitem{DS} P. Dodds, and F. Sukochev,
{\it Contractibility of the linear group in Banach spaces of
measurable operators}, Int. Eq. Operator Theory 26 (1996),
305-337.
\bibitem{ER} E. Effros, and Z.-J. Ruan, {\it On approximation
properties for operator spaces}, Int. J. Math. 1 (1990), 163-187.
\bibitem{FK} T. Fack, and H. Kosaki,
{\it Generalized $s$-numbers of $\tau$-measurable operators},
Pacific J. Math. 123 (1986), 269-300.
\bibitem{HP} U. Haagerup, and G. Pisier,
{\it Bounded linear operators between $C^*$-algebras},Duke Math.
J. 71 (1993), 889-925
\bibitem{KM} N. Kalton, and S. Montgomery-Smith,
{\it Interpolation of Banach space}, pp. 1131-1175 in
``Handbook of the Geometry of Banach Spaces", Vol. II,
edited by W.B. Johnson and J.Lindenstrauss, Elsevier, 2003.
\bibitem{KS} N. Kalton, and F. Sukochev, {\it Symmetric norms and
spaces  of operators}, J. Reine Angew. Math., to appear.
\bibitem{KPS} S. Krein, J. Petunin, and E. Semenov,
{\it Interpolation of linear operators}, Translations of
Mathematical Monographs 54, Amer. Math. Soc., 1982.
\bibitem{LT2} J. Lindenstrauss, and L.  Tzafriri,
{\it Classical Banach spaces II}, Springer-Verlag, 1979.
\bibitem{LP} F. Lust-Piquard, {\it
In\'egalit\'es de Khintchine dans $C_p,(1<p<\infty)$},  C. R.
Acad. Sci. Paris S{\'e}r. I Math.  303  no. 7 (1986),  289-292.
\bibitem{LPP} F. Lust-Piquard, and G. Pisier,
{\it Non commutative Khintchine and Paley inequalities}, Arkiv for
Math. 29 (1991), 241-260.
\bibitem{LPX} F. Lust-Piquard, and Q. Xu,
{\it The little Grothendieck theorem and Khintchine inequalities
for symmetric spaces of measurable operators}, J. Funct. Anal. 244
(2007), 488-503.
\bibitem{MW} M. Marsalli, and G. West, {\it Noncommutative $H^p$
spaces}, J. Operator Theory 40 (1998), 339-355.
\bibitem{M} B. Maurey, {\it Type et cotype dans les espaces munis de
structures locales inconditionnelles},
Expos\'es XXIV et XXV in ``S\'eminaire Maurey-Schwartz (1973-1974)",
Ecole Polytechnique, 1974.
\bibitem{M2} B. Maurey, {\it Type, cotype and $K$-convexity},  pp. 1299-1332 in
``Handbook of the Geometry of Banach Spaces", Vol. II, edited by
W.B. Johnson and J. Lindenstrauss, Elsevier, 2003.
\bibitem{MC} T. McConnel, {\it On Fourier multiplier
transformations on Banach-valued functions}, Trans. Amer. Math.
Soc. 285 (1984), 739-757.
\bibitem{O}  V. I. Ovcinnikov, {\it Symmetric spaces of measurable operators},
Dokl. Akad. Nauk SSSR 191 (1970), 769-771.
\bibitem{P1} G. Pisier, {\it Les in\'egalit\'es de
Khintchine-Kahane, d'apr\`es C. Borell}, Expos\'e n$^{\circ}$ 7 in
``S\'eminaire sur la g\'eom\'etrie des espaces de Banach
(1977-1978)", Ecole Polytechnique, 1978.
\bibitem{Pi} G. Pisier, {\it Holomorphic semigroups and the
geometry of Banach spaces}, Ann. of Math. 115 (1982), 375-392.
\bibitem{P2} G. Pisier, {\it Introduction to Operator Space Theory},
London Math. Soc. Lecture Notes Ser. 294, Cambridge Univ. Press,
2003.
\bibitem{P3} G. Pisier, {\it Non-commutative vector valued
$L_p$-spaces and completely $p$-summing maps}, Ast\'erique 247,
Soc. Math. France, 1998.
\bibitem{PX} G. Pisier, and Q. Xu, {\it Non-commutative $L^p$-spaces},
pp. 1459-1517 in ``Handbook of the Geometry of Banach Spaces",
Vol. II, edited by W.B. Johnson and J. Lindenstrauss, Elsevier,
2003.
\bibitem{S} K. Saito, {\it A note on invariant subspaces for
finite maximal subdiagonal algebras}, Proc. Amer. math. Soc. 77
(1979), 348-352.
\bibitem{SC} F. Sukochev and V. Chilin, {\it Symmetric spaces
over semifinite von Neumann algebras} (Russian), Dokl. Akad. Nauk
SSSR 313 (1990), 811-815; translation in Soviet Math. Dokl. 42
(1991), 97-101.
\end{thebibliography}
\end{document}